\def\ps@pprintTitle{%
  \let\@oddhead\@empty
  \let\@evenhead\@empty
  \def\@oddfoot{\reset@font\hfil\thepage\hfil}
  \let\@evenfoot\@oddfoot
}
\def\F{\mathcal{F}}
\def\bd{\mathbf{d}}
\def\bbd{\mathbf{bd}}
\def\bdd{\mathbf{dd}}
\def\bdf{\bd^{\F}}
\def\mc{\mathbf{maxC}}
\def\a{d^+}
\def\b{d^-}
\newcommand\junk[1]{}
\numberwithin{equation}{section}
\newtheorem{theorem}{Theorem}[section]
\newtheorem{lemma}[theorem]{Lemma}
\newtheorem{observ}[theorem]{Observation}
\theoremstyle{definition}
\newtheorem{definition}[theorem]{Definition}
\newtheorem{remark}[theorem]{Remark}
\def\qed{{\hfill$\Box$}}
\def\G{\mathbb{G}}
\def\C{\mathcal{C}}
\newcommand{\proba}[1]{#1}
\newif\ifdeveloping
\newif\ifproba
    \renewcommand{\proba}[1]{\ }
\newcommand\old[1]{}
\def\B{\mathbb{B}}
\def\<{\left\langle}
\def\>{\right\rangle}
\begin{document}
\begin{frontmatter}
\title{Constructing, sampling and counting graphical realizations of restricted degree sequences \tnoteref{t1}}

\author[renyi]{P\'eter L. Erd\H os\fnref{elp}}
\author[elte]{S\'andor Z.  Kiss\fnref{kiss}}
\author[renyi]{Istv\'an Mikl\'os\fnref{miklos}}
\author[renyi]{Lajos Soukup\fnref{soukup}}
\address[renyi]{Alfr\'ed R{\'e}nyi Institute of Mathematics, Re\'altanoda u 13-15 Budapest,
        1053 Hungary\\
        {\tt email}: $<$elp,miklosi,soukup$>$@renyi.hu}
\address[elte]{Institute of Automatization and Computation, 1111   Budapest, L\'agym\'anyosi \'ut 11, Hungary and University of Technology and Economics, Department of Algebra, 1111 Budapest, Egry J\'ozsef utca 1.  {\tt email}: kisspest@cs.elte.hu}
\fntext[elp]{Partly supported  by  Hungarian NSF, under contract NK 78439}
\fntext[kiss]{Partly supported by Hungarian NSF, under contract K77476 and NK105645}
\fntext[miklos]{Partly supported by Hungarian NSF, under contract  PD84297}
\fntext[soukup]{Partly supported by Hungarian NSF, under contract  NK 83726}
\tnotetext[t1]{We  acknowledge financial support from grant \#FA9550-12-1-0405 from the U.S. Air Force Office of Scientific Research (AFOSR) and the Defense Advanced Research Projects Agency (DARPA).}
\begin{abstract}
With the current burst of network theory (especially in connection with social and biological networks) there is a renewed interest on realizations of given degree sequences. In this paper we propose an essentially new degree sequence problem: we want to find graphical realizations of a given degree sequence on labeled vertices, where certain would-be edges are {\em forbidden}. Then we want to sample uniformly and efficiently all these possible realizations. (This problem can be considered as a special case of Tutte's $f$-factor problem, however it has a favorable sampling speed.)

We solve this {\em restricted degree sequence}  (or RDS for short) problem completely if the forbidden edges form a bipartite graph, which consist of  the union of a (not necessarily maximal) 1-factor and a (possible empty)  star. Then we show how one can sample the space of all realizations of these RDSs uniformly and efficiently when the degree sequence describes a {\em half-regular} bipartite graph. Our result contains, as special cases, the well-known result of Kannan, Tetali and Vempala on sampling regular bipartite graphs and a recent result of Greenhill on sampling regular directed graphs (so it also provides new proofs of them).

The RDS problem descried above is self-reducible, therefore our {\em fully polynomial almost uniform sampler} (a.k.a. FPAUS) on the space of all realizations also provides a {\em fully polynomial randomized approximation scheme} (a.k.a. FPRAS) for approximate counting of all realizations.
\end{abstract}
\begin{keyword}
{\small restricted degree sequences; rapidly mixing MCMC; Sinclair's multicommodity flow method; FPAUS; self-reducible problem; FPRAS}
\end{keyword}
\end{frontmatter}

\section{Introduction}\label{sec:intro}
\noindent
In the last fifteen years, network theory has been undergoing an exponential grow. One of its more important problems is to algorithmically  construct networks with predefined parameters or uniformly sampling these networks with these given parameters. For general background,  the interested reader can turn to the now-classic book of Newman, Barab\'asi and Watts (\cite{NBW06}) or to the more recent book of Newman (\cite{newman}).

In this paper we study networks (or graphs as we like to refer them) with given degree sequences. We propose a new degree sequence problem class called {\bf restricted degree sequence problem} and solve its first instance: we build procedure to decide quickly whether there exists a graph $G$ with a given degree sequence, where $G$ completely avoids a predefined set of forbidden edges, then develop a fast mixing Markov Chain approach (in line of Kannan, Tetali and Vempala's approach, see \cite{KTV97}) to sample almost uniformly all different realizations of the sequence.

We will focus on the technical details of this problem;  we do not intend to survey the history of the degree sequence problems in general or their connections to other developments in network theory. One can find detailed information about this specific background  in  the recent paper of Greenhill (\cite{green}), or in our previous paper \cite{MES} (for which the current paper is a direct continuation). Therefore we will touch only the directly affected definitions and earlier results.

In the next section we discuss some of the known degree sequence problems and introduce our proposed new problem class. In Section~\ref{sec:factor} we will study in details one instance of this new problem class, describing and proving a fast Havel-Hakimi type greedy algorithm to construct realizations. Then in Section \ref{sec:MCMC} we discuss some known MCMC approaches to sample  different kind of degree sequence realizations and state our result about the rapid uniform sampling of this instance of our proposed model. In Sections \ref{sec:miles} and \ref{sec:anal}  we will describe our approach in details. In Section \ref{sec:counting} we show that the studied problem is a self-reducible one, therefore our almost uniform sampling method provides good approximate counting the set of all realizations. Finally in the Appendix we will discuss a necessary technical detail of the required, but very slight generalization of the "Simplified Sinclair's method" introduced in \cite{MES}.

\section{Degree sequences and restricted degree sequences}\label{sec:degree}
\noindent
Let's fix a labeled underlying vertex set $V$ of $n$ elements. The {\em degree sequence} $\bd(G)$ of a {\em simple} graph $G=(V,E)$ is the  sequence of its vertex degrees: $\bd(G)_i= d(v_i).$ (Here "simple" means that there are no loops or multiple edges. In general, in cases where multiple edges and/or loops are not excluded the corresponding results are easier. See for example Ryser, \cite{R57}.)  A non-negative integer sequence $\bd= (d_1,\ldots, d_n)$ is {\em graphical} iff $\bd(G) = \mathbf{d}$ for some simple graph $G$, and then $G$ is a {\em graphical realization} of $\mathbf{d}$. The simplest method (a straightforward greedy algorithm) to decide the graphicality of an integer sequence was discovered by Havel  (\cite{H55}), and was rediscovered independently later by Hakimi (\cite{H62}). The validity of their algorithm is proved by the means of the  {\em swap} operation. It is defined as follows:

Let $G$ be a simple graph and assume that $a,b,c$ and $d$ are different vertices.  Furthermore, assume that $(a,c), (b,d) \in E(G)$ while $(b,c), (a,d) \not \in E(G)$. Then
\begin{equation}\label{eq:swap}
E(G')= E(G) \setminus \{(a,c), (b,d)\} \cup \{(b,c), (a,d)\}
\end{equation}
is another realization of the same degree sequence. We call this operation a {\bf swap} and will denote by $ac, bd \Rightarrow bc, ad.$

\medskip\noindent
The analogous notions for bipartite graphs are the following: if $B$ is a simple bipartite graph then its vertex classes will be denoted by $U(B)=\{ u_1,\ldots, u_k\}$ and $ W(B)=\{w_1,\ldots,w_\ell\}$, and we keep the notation $V(B)=U(B)\cup W(B)$.  The {\em
bipartite degree sequence} of $B$, $\bbd(B)$ is defined as follows:
$$
\bbd(B)=\Big(\big (d(u_1), \ldots,d(u_k)\bigr ), \bigl (d(w_1),\ldots,d(w_\ell)\bigr )\Big ).
$$
We can define the swap operation for bipartite realizations similarly to (\ref{eq:swap}) but we must take some care: it is not enough to assume that $(b,c), (a,d) \not \in E(G)$ but we have to know that $a$ and $b$ are in the same vertex class, and $c$ and $d$ are in the other.

To make clear whether a vertex pair can form an edge in a realization or not we will call a vertex pair a  {\bf chord} if it can hold an actual edge in a realization. Those pairs which cannot accommodate an edge are the {\bf non-chords}. (Pairs from the same vertex class of a bipartite graph are non-chords.)

\medskip\noindent
For directed graphs we consider the following definitions: Let $\vec G$ denote
a directed graph (no parallel edges, no loops, but oppositely directed edges between two vertices are allowed) with vertex set $X(\vec G) = \{x_1,x_2,  \ldots,x_n \}$ and edge set $E( \vec G)$. We use the bi-sequence
$$
\bdd(\vec G) =\Big (\left (\a_1,\a_2,\ldots,\a_n \right ), \left (\b_1, \b_2,
  \ldots, \b_n\right ) \Big )
$$
to denote the degree sequence, where $\a_i$ denotes the out-degree of vertex $x_i$ while $\b_i$ denotes its in-degree. A bi-sequence of non-negative integers is called a {\em directed degree sequence} if there exists a  directed graph $\vec G$ such that $\mathbf{(\a,\b)}=\bdd(\vec G)$. In this case we say that $\vec G$ {\em  realizes} our directed degree sequence.

We will apply the following {\em representation} of  the directed graph $\vec G:$  let $B({\vec G})=(U,W; E)$ be  a bipartite graph where each class consists of one copy of every vertex.  The edges adjacent to a vertex $u_x$ in class $U$ represent the out-edges from $x$, while the edges adjacent to a vertex $w_x$ in class $W$ represent the in-edges to $x$ (so a directed edge $xy$ is identified with the edge $u_xw_y$). If a vertex has no in- (out-) degree in the directed version, then we delete the corresponding vertex from $B({\vec G}).$ (This representation is an old trick, applied already by Gale \cite{G57}.)  There is no loop in our directed graph, therefore there is no $(u_x,v_x)$ type edge in its bipartite realization - these vertex pairs are non-chords.

\bigskip\noindent
In this paper we will study the following common generalization of all the previously mentioned degree sequence problems:  \\
The {\bf restricted degree sequence} problem $\bd ^{\F}$ consists of a degree sequence $\bd$ and a set $\F \subset \binom{V} { 2}$ of {\bf forbidden} edges.  The problem, as it is in the original problem, is to decide whether there is a simple graph $G$ on $V,$ completely avoiding the elements of $\F,$ which provides the given degree sequence, furthermore to design a uniform sampler of all realizations.

The {\bf bipartite restricted degree sequence} problem $\bbd^{\F}$ consists of a
bipartite degree
sequence $\bbd$ on $(U,W)$, and a set $\F \subset [U,W]$ of {\bf forbidden} edges.
The problem, as it is in the original problem, is to decide whether there is a
bipartite  graph $G$ on $(U,W)$ completely avoiding the elements of $\F,$ which
provides the given degree sequence.

Clearly a bipartite restricted degree sequence problem $\bbd^{\F}$ on $(U,W)$
is  a restricted degree sequence problem $\bd^{\F'}$ on $U\cup W$,
where $\F'=\F\cup [U]^2\cup[W]^2$.

It is quite obvious that the restricted degree sequence problem is a special case of Tutte's $f$-factor problem (\cite{T52}). (This was essentially pointed out already in the seminal paper of Paul Erd\H{o}s and Tibor Gallai,  see  \cite{EG60}). However, while Tutte's approach provides a polynomial time algorithm to decide whether a given $\bd ^{\F}$  restricted degree sequence problem can be satisfied, it does not provide an easy greedy algorithm to do so and also not really suitable to generate all possible realizations. It is important to add that while the fundamental result of Jerrum, Sinclair and Vigoda on sampling perfect matchings in graphs (\cite{JSV}) provides a uniform sampling approach for the possible realizations, their method is not useful in  practice. We will return to this issue at the end of Section \ref{sec:counting}.

As we already mentioned the RDS notion is a common generalization of several "classical" degree sequence problems. For instance, the case of bipartite degree sequences is such an example. Another  well known example is the case of directed degree sequences:  in its bipartite representation,  a forbidden 1-factor excludes the directed loops in the original directed graph.

\begin{definition}\label{def:cc}
Let $\bd^{\F}$ be a restricted degree sequence problem and let $G$ be a realization of it.
The sequence of  vertices $\C=(x_1, x_2,\ldots, x_{2i})$ is a {\bf chord-circuit} if:
({\bf D1})  all pair $x_1x_2, x_2x_3, \ldots x_{2i-1}x_{2i}, x_{2i} x_1$ are chords; and ({\bf D2}) each of these chords is different. \\
A chord-circuit is {\bf elementary} if ({\bf D3}) no vertex occurs more than twice;  furthermore  ({\bf D4}) when two copies of the same vertex exist, then their distance along the circuit is odd.   \\
The chord-circuit  $\C$  {\bf alternate}s  if   the chords along $\C$  are edges and non-edges in turn (for example  $x_{2j-1} x_{2j}$ are edges for $1\le j \le i$, while the other chords are not edges in $G$). \\
Deleting the actual edges along $\C$ from $G$ and adding the other chords as edges constructs a new graph  $G'$ which is again a realization of $\bd^\F.$ This operation is known as a {\bf circular $C_{2i}$-swap} and denoted by $S_{\C}$. \\
Finally, two {\em different} vertices of the alternating chord-circuit $\C$ form a {\bf potential vertex pair} (or {\bf PV-pair} for short) if  (i) this pair is not a chord along the circuit; and (ii) the distance of the vertices  along the sequence (which is the number of chords between them) is odd and it is not 1.  If each PV-pair  is a non-chord (that is $\in \F$), then this circular swap is called a {\bf  $\F$-compatible swap} or {\bf $\F$-swap} for short.
\end{definition}
The $\F$-swap is one of the central notions of this paper. When $i=2$ then the circular $C_4$-swap coincides with the classical Havel--Hakimi swap. When $i=3$ then we get back the notion of the {\bf triangular $C_6$-swap}, which was introduced in paper \cite{EKM} in connection with directed degree sequences.

We define the {\bf weight} of the $\F$-compatible circular $C_{2i}$-swap as $w(C_{2i})=i-1.$ This definition is in accordance with the definitions of the weight of the classical HH-swaps, and the weight of a triangular $C_6$-swap in paper \cite{EKM}. Furthermore it is well known (see for example  again \cite{EKM}) that in case of simple graphs $(i-1)$ Havel-Hakimi swaps are needed to alternate the edges along $C_{2i}.$ As we will see next the same applies for any (elementary) circular $C_{2i}$-swap:
\begin{lemma}\label{th:weight}
Let $G$ be a realization of $\bd^\F$ and let the elementary chord-circuit $\C$ of length $2i$ alternate. Then the  circular $\C$-swap operation can be carried out by a sequence of $\F$-swaps of  total weight $i-1.$
\end{lemma}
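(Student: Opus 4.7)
The plan is to proceed by induction on $i$, the half-length of the chord-circuit.

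For the base case $i=2$, the circuit $\C$ has length $4$, and every non-adjacent pair of positions is at distance exactly $2$. Hence $\C$ has no PV-pairs and is vacuously $\F$-compatible, so $S_\C$ is itself an $\F$-swap of weight $i-1=1$.

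For the inductive step $i\ge 3$, if $\C$ is already $\F$-compatible we are done since $S_\C$ is a single $\F$-swap of weight $i-1$. Otherwise some PV-pair $\{x_j,x_k\}$ with $j<k$, $k-j$ odd, $k-j\neq 1$ satisfies $x_jx_k\notin\F$, so $x_jx_k$ is a chord. I split $\C$ along this diagonal into
$$\C_A=(x_j,x_{j+1},\ldots,x_k)\quad\text{and}\quad\C_B=(x_k,x_{k+1},\ldots,x_{2i},x_1,\ldots,x_j),$$
each closed by the auxiliary chord $x_jx_k$. Writing $2m:=k-j+1$, these sub-circuits have even lengths $2m$ and $2(i-m+1)$, and their weights sum to $(m-1)+(i-m)=i-1$, matching the target.

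Three points then need verification. (i) Each sub-circuit is itself an elementary alternating chord-circuit in the relevant sense: (D1)--(D2) hold because inherited chords of $\C$ lie outside $\F$ and $x_jx_k\notin\F$, while (D3)--(D4) hold because any vertex repeating in a sub-circuit repeats within a contiguous arc of $\C$, where its two positions have the same distance (hence the same parity) as in $\C$. (ii) Because $k-j$ is odd and the chords of $\C$ alternate $E,N,E,N,\ldots$, a parity count on the last non-closing chord of each sub-circuit shows that the condition ``$x_jx_k$ is a non-edge'' makes one of $\C_A,\C_B$ alternate and ``$x_jx_k$ is an edge'' makes the other alternate; in particular exactly one of them, say $\C_A$, is alternating in $G$. (iii) The swap $S_{\C_A}$ flips the edge-status of $x_jx_k$, which is precisely what is needed to turn $\C_B$ from non-alternating in $G$ into alternating in the resulting graph $G_1$.

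Granting (i)--(iii), I apply the inductive hypothesis to $\C_A$ inside $G$ and then to $\C_B$ inside $G_1$, obtaining $\F$-swap sequences of total weights $m-1$ and $i-m$. Their concatenation realizes $S_\C$, because every chord of $\C$ is flipped exactly once while the auxiliary chord $x_jx_k$ is flipped twice and returns to its original status. I expect the main obstacle to be item (ii), the parity bookkeeping that tracks how alternation is distributed between the two sides of the split and confirms that the single flip of $x_jx_k$ by $S_{\C_A}$ realigns $\C_B$'s alternation; items (i) and (iii) then fall out from the same parity analysis.
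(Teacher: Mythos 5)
Your proposal is correct and follows essentially the same route as the paper's proof: induct on the circuit length, and when some PV-pair is a chord, split $\C$ along that diagonal into two shorter chord-circuits whose weights sum to $i-1$, process the alternating one first so that the diagonal's status flips and makes the second one alternating, then process that. The paper states the splitting step more tersely, while you additionally spell out the elementarity and parity checks, but the underlying argument is identical.
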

\noindent More precisely  there exists a sequence  $G=G_0, G_1,\ldots, G_\ell$ of realizations such that for each $j=0,\ldots, \ell-1$ there exists an $\F$-swap operation from $G_j$ to $G_{j+1},$  the difference between $G$ and $G_\ell$ is exactly the alternating circuit $\C,$ finally the total weights of those $\F$-swap operations is $i-1.$
\begin{proof}
We apply mathematical induction for the length of the chord-circuit: assume this is true for all circuits of length at most $2i-2$. (For $i=3$ this is the classical Havel-Hakimi swap.) Then take an alternating elementary chord-circuit $\C$ of length $2i$ in a realization of $\bd ^\F.$

If each PV-pair in $\C$ is a non-chord, then the circular $C_{2i}$-swap itself is a $\F$-swap of weight $i-1$. So we may assume that there is a PV-pair $uv$ in $\C$ which is a chord. This chord together with the two "half-circuits" of $\C$ form chord-circuits $\C_1$ and $\C_2$ using the chords of the original circuit $\C$ and twice the chord $uv$. One of them, say $\C_1,$ is alternating. The length of $\C_1$ is $2j < 2i$ therefore there exists a $\F$-compatible swap sequence of total weight $j-1$ to process it. After the procedure the status of $uv$ will alter into the other status. With this new status of the chord the circuit $\C_2$ becomes an alternating one, so it can be processed with $\frac{2i + 2 - 2j}{2} - 1$ total weight - and after this procedure the chord $uv$ is switched back to its original status. So we found a swap sequence of total weight $i-1$ which finishes the proof.
\end{proof}

\bigskip\noindent {\bf The space of all realizations}: Consider now the set of all possible realizations of a restricted graphical degree sequence $\bd^\F.$ Let $G$ and $H$ be two different realizations. The natural question is whether $G$ can be transformed into $H$ using $\F$-swaps?

For classical degree sequences this problem was  solved affirmatively already in 1891 by Petersen in the (by now  almost completely forgotten) paper \cite{pet}. Havel's paper \cite{H55} also provided (an implicit)  solution. For bipartite graphs (with possible multiple edges but no loops) this was done by Ryser (\cite{R57}). For simple bipartite graphs it was folklore. Finally for directed graphs it was done in \cite{KW73} (and later rediscovered in \cite{EMT09}).
\begin{theorem}\label{th:general-irreducible}
The space of all realizations $\G=(\mathbb{V},\mathbb{E})$  of the restricted degree sequences problem $\bd^\F$ is connected. Therefore the usual Markov chain defined on $\G$ is irreducible.
\end{theorem}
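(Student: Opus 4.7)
The plan is to prove the theorem by induction on $|E(G)\triangle E(H)|$, where $G$ and $H$ are two realizations of $\bd^\F$. The base case $G=H$ is trivial, and irreducibility of the Markov chain follows at once from connectivity since the $\F$-swap relation is symmetric. For the inductive step, set $M = G\triangle H$ and color the edges in $G\setminus H$ red and those in $H\setminus G$ blue. Every edge of $M$ is an actual edge in some realization of $\bd^\F$, hence a chord, so condition (D1) will hold automatically for any walk I build inside $M$.

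The key observation is that $d_G(v)=d_H(v)$ forces the red and blue degrees of $v$ in $M$ to be equal for every $v$. This lets one decompose $M$ into edge-disjoint color-alternating closed walks (pair red half-edges to blue half-edges at each vertex and follow the pairing), so at least one non-trivial alternating closed sub-walk of $M$ exists. Let $\C=(x_1,\dots,x_{2i})$ be one of minimum length. Alternation of red and blue is automatic, and in the realization $G$ this means that the chords along $\C$ are edges and non-edges of $G$ in turn. Minimality now forces $\C$ to be elementary: if two occurrences $x_j=x_k$ were at even distance along $\C$, the walk would split at $x_j$ into two strictly shorter alternating closed walks, contradicting minimality; and if some vertex appeared three times, two of those occurrences would necessarily be at even distance, giving the same contradiction. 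So (D3)--(D4) hold, and (D2) follows from the same reasoning because a repeated chord would either duplicate a vertex at even distance or force one edge of $M$ to carry both colors, which is impossible.

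Having produced an elementary alternating chord-circuit $\C$ of length $2i$ in $G$, Lemma~\ref{th:weight} supplies a sequence of realizations $G=G_0,G_1,\dots,G_\ell=G'$ in which every consecutive pair differs by a single $\F$-swap and $G'$ is obtained from $G$ by performing the circular $\C$-swap. Since $\C\subseteq M$, each chord of $\C$ that was an edge of $G$ is absent from $H$ while each non-edge of $\C$ along $\C$ is present in $H$; therefore $|E(G')\triangle E(H)|=|E(G)\triangle E(H)|-2i$, and the inductive hypothesis yields a walk in $\G$ from $G'$ to $H$. Concatenating this with $G_0,\dots,G_\ell$ gives the required $\F$-swap walk from $G$ to $H$. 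I expect the most delicate step to be the verification of (D2)--(D4) for the minimum-length walk: it hinges on a careful parity analysis of vertex and chord repetitions, and it is what upgrades a mere alternating closed walk, to which Lemma~\ref{th:weight} does not directly apply, to an elementary circuit on which the lemma can be invoked.
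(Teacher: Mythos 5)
Your proposal is correct and follows essentially the same route as the paper: both take the symmetric difference, decompose it into alternating closed walks, use a parity/splitting argument to guarantee the circuits are elementary, and then invoke Lemma~\ref{th:weight}. The only difference is packaging --- you induct and peel off a \emph{minimum-length} alternating closed walk (using minimality to force elementarity), whereas the paper processes a single decomposition with the \emph{maximal number} of circuits, which forces elementarity by the same splitting argument.
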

\begin{proof}
What we have to prove is the following: let $G$ and $H $ be two realizations of $\bd^\F.$ Then we have to find a series of realizations $G=G_0, \dots, G_{i-1},$ $G_i=H,$ such that for each $j=0,\ldots, i-1$ there exists an $\F$-swap from $G_j$ to $G_{j+1}.$
(Such realization pairs form the edge set of the Markov chain $\G.$)

Consider the symmetric difference of the edge sets of the two realizations: $\Delta = E(G) \triangle E(H).$ This set is  two-colored  by the original hosts of the edges: there are $G$-edges and $H$-edges. It is clear that in the graph $\mathcal{G}=(V, \Delta)$ for each vertex $v$ the numbers of $G$-edges and $H$-edges incident to $v$ are the same: $d_G(v) = d_H(v).$ By Euler's method it can be decomposed into alternating circuits $\C_1,\ldots, \C_\ell.$

Let's recall that a {\bf circuit}  in a simple graph $G$ is  a sequence of vertices $v_0,\ldots,v_{2t}$, where $v_0=v_{2t}$  s.t. the consecutive vertices are adjacent  and each edge can be used at most once. Note that there can also be other indices $i<j$ such that $v_i=v_j$. A circuit is called a {\bf cycle}, if it is simple, i.e., for any $i<j$, $\;v_i=v_j$ only if $i=0$ and $j=2t$. A circuit is {\bf alternating} for $G$ and $H$ if the edges come in turns from $E(G)$ and $E(H).$ When this is the case then the corresponding chord-circuit in realization $G$ (as well in $H$) is alternating.

We can find a decomposition  that no circuit contains a vertex $v$ twice s.t. their distance $\delta$ (the number of edges between them) is even. Indeed, since $\mathcal{G}$ is simple, therefore $\delta$ is at least four, consequently the vertex $v$ can split the original circuit into two smaller, but still alternating circuits. So for example an alternating circuit decomposition with maximal number of circuits has this property. It also implies that no circuit may contain a
vertex three times otherwise at least two copies of the vertex would be of even distance from each other.

The application of Lemma \ref{th:weight} proves that each circuit $\C$ can be processed with $|\C|/2 -1$ total weight. This finishes the proof.
\end{proof}

In the paper \cite{EKM} the following formula was developed for the required minimum weight of transforming one realization in to an other one of the same (unrestricted) degree sequence: Consider the realizations $G$ and $H$ and the symmetric difference $E \Delta F$ of their edge sets.  Denote $ \mc (G,H)$ the maximum possible number of the alternating circuits in such a decomposition.
\begin{theorem}[Erd\H{o}s - Kir\'aly - Mikl\'os \cite{EKM}]\label{th:SD-main1}
For any pair $G,H$ of  realizations  of a given degree sequence the weight of the shortest swap sequence between the two realizations is
\begin{equation}\label{eq:dist1}
\mathbf{dist} (G,H) = \frac{|E(G) \Delta E(H)|} {2}  - \mc(G,H).
\end{equation}
\end{theorem}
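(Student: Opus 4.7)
The equality requires proving both inequalities. For the upper bound, I would take a decomposition of $E(G) \triangle E(H)$ into $\mc(G,H)$ alternating circuits $\mathcal{C}_1,\ldots,\mathcal{C}_k$ of lengths $2i_1,\ldots,2i_k$, chosen so that each $\mathcal{C}_r$ is elementary (no vertex occurring twice at even distance along the circuit, which is achieved by the same extraction argument used in the proof of Theorem~\ref{th:general-irreducible}). Lemma~\ref{th:weight} then processes each $\mathcal{C}_r$ by an $\F$-swap sequence of total weight $i_r - 1$; carrying out the sequences one circuit at a time transforms $G$ into $H$ with total weight
$$
\sum_{r=1}^{k}(i_r-1) \;=\; \tfrac12|E(G)\triangle E(H)| - \mc(G,H).
$$

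For the lower bound I would set $\Phi(X,Y):=\tfrac12|E(X)\triangle E(Y)|-\mc(X,Y)$ and aim to show that $\Phi$ is $1$-Lipschitz along swaps: $\Phi(G,H)-\Phi(G',H)\le w$ whenever $G'$ arises from $G$ via an $\F$-swap of weight $w$. Telescoping along any swap sequence from $G$ to $H$ and using $\Phi(H,H)=0$ then yields $\mathbf{dist}(G,H)\ge \Phi(G,H)$. Thanks to Lemma~\ref{th:weight}, which factors every $C_{2i}$-swap into $i-1$ consecutive Havel--Hakimi swaps without altering the start or end realization, it suffices to verify the Lipschitz estimate for $w=1$.

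So fix a Havel--Hakimi swap $ac,bd\Rightarrow bc,ad$ turning $G$ into $G'$. The four edges $\{ac,bd,bc,ad\}$ are precisely those whose membership in the symmetric difference with $E(H)$ flips; every other edge has identical status in $E(G)\triangle E(H)$ and $E(G')\triangle E(H)$. Writing $x$ for the number of these four edges already in $E(G)\triangle E(H)$, one computes $s'-s=2-x$ with $s=\tfrac12|E(G)\triangle E(H)|$, so the required Lipschitz bound rearranges to $\mc(G',H)\le\mc(G,H)+(3-x)$. To prove this I would start from a maximum alternating-circuit decomposition $\mathcal{D}'$ of $E(G')\triangle E(H)$, delete the toggled edges incident to $\{a,b,c,d\}$, and splice the resulting alternating arcs back together using the complementary toggled edges of $E(G)\triangle E(H)$; this produces a decomposition $\mathcal{D}$ of $E(G)\triangle E(H)$ with $|\mathcal{D}|\ge|\mathcal{D}'|-(3-x)$. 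The extremal case $x=4$ is immediate, since the four edges then form an alternating $C_4$ which must be a single circuit of $\mathcal{D}'$ and whose removal drops the count by exactly one.

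\textbf{Main obstacle.} The delicate step is the splicing analysis for intermediate $x\in\{1,2,3\}$: depending on how the toggled edges are distributed among the circuits of $\mathcal{D}'$ and on the $G'$/$H$-orientation of the alternation through each of the four vertices, reattaching the complementary edges may merge two arcs into one circuit or split one circuit into two. One must verify, by a case analysis on these distributions, that the net change in the circuit count is never worse than $-(3-x)$, so that $\Phi$ indeed decreases by at most one per Havel--Hakimi swap. This case analysis, while elementary in each subcase, is the main technical burden and drives the definition of $\mc$ as the parameter in the distance formula.
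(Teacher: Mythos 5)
First, note that the paper does not prove this statement at all: Theorem~\ref{th:SD-main1} is imported verbatim from \cite{EKM} and used as a black box, so there is no in-paper proof to compare yours against. Judged on its own terms, your proposal has the right architecture (it is in fact the architecture of the known proof): an upper bound by processing a maximum alternating-circuit decomposition circuit by circuit via Lemma~\ref{th:weight}, and a lower bound by showing that the potential $\Phi(X,Y)=\tfrac12|E(X)\triangle E(Y)|-\mc(X,Y)$ drops by at most the weight of each swap and telescoping. The upper bound is essentially complete, since you correctly observe that a decomposition with the maximum number of circuits can be assumed elementary (the argument inside the proof of Theorem~\ref{th:general-irreducible}), so Lemma~\ref{th:weight} applies to each circuit and the weights sum to $\tfrac12|E(G)\triangle E(H)|-\mc(G,H)$.

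The lower bound, however, contains a genuine gap, and it sits exactly where the whole difficulty of the theorem lives: the inequality $\mc(G',H)\le\mc(G,H)+(3-x)$ for a single swap $ac,bd\Rightarrow bc,ad$ is asserted via a splicing construction ("delete the toggled edges, reattach the arcs using the complementary edges") but never verified. You yourself flag the case analysis over how the $4-x$ toggled edges are distributed among the circuits of $\mathcal{D}'$, and over the orientations of the alternation at $a,b,c,d$, as the "main technical burden" and leave it undone; until that analysis is carried out one does not know that the circuit count never drops by more than $3-x$, and without it the telescoping argument proves nothing. (Two smaller slips: in the $x=4$ discussion you have the roles of $\mathcal{D}$ and $\mathcal{D}'$ reversed -- for $x=4$ the four chords form an alternating $C_4$ in $E(G)\triangle E(H)$, which you \emph{add} to a maximum decomposition of $E(G')\triangle E(H)$ to gain one circuit; and the appeal to Lemma~\ref{th:weight} to reduce to weight-one swaps is superfluous here, since in the unrestricted setting of \cite{EKM} the admissible swaps are already the weight-one Havel--Hakimi swaps.) So the proposal is a correct and well-aimed skeleton, but it does not yet constitute a proof; the missing splicing lemma is precisely the content one would have to reproduce from \cite{EKM}.
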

\noindent Now we show that the analogous result holds for any restricted degree sequence.
\begin{theorem}\label{th:SD-general}
Let $G, H$ be two realizations of the same restricted degree sequence $\bd^\F.$ Then
\begin{equation}\label{eq:dist2}
\mathbf{dist}_{\F} (G,H) = \frac{|E(G) \Delta E(H)|} {2}  - \mc(G,H).
\end{equation}
\end{theorem}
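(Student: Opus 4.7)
The plan is to show both inequalities separately, with the upper bound being an extension of the argument in Theorem~\ref{th:general-irreducible} and the lower bound reducing to Theorem~\ref{th:SD-main1}.

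For the upper bound $\mathbf{dist}_\F(G,H) \le |E(G)\triangle E(H)|/2 - \mc(G,H)$, I would start from an alternating circuit decomposition of $\Delta := E(G)\triangle E(H)$ into exactly $\mc(G,H)$ circuits $\C_1,\ldots,\C_{\mc(G,H)}$ of lengths $2i_1,\ldots,2i_{\mc(G,H)}$. Maximality of the decomposition already forces each $\C_k$ to be elementary in the sense of Definition~\ref{def:cc}: if some vertex appeared twice at even distance along a circuit, or three times at all, one could split that circuit into two shorter alternating ones, contradicting the choice of decomposition. This is precisely the observation used in the proof of Theorem~\ref{th:general-irreducible}. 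Then I would process the $\C_k$'s one after the other, applying Lemma~\ref{th:weight} to each: from the current realization (initially $G$) the circuit $\C_k$ is still alternating, because previous swaps only modified edges of previously processed, edge-disjoint circuits; Lemma~\ref{th:weight} therefore supplies an $\F$-swap sequence of weight exactly $i_k-1$ that flips $\C_k$. Summing gives total weight
\[
\sum_{k=1}^{\mc(G,H)} (i_k-1) \;=\; \frac{|\Delta|}{2}-\mc(G,H).
\]

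For the lower bound $\mathbf{dist}_\F(G,H) \ge |E(G)\triangle E(H)|/2 - \mc(G,H)$, I would exploit the fact that the weight of an $\F$-swap was defined to be the number of classical Havel--Hakimi swaps realizing the same transformation when no forbidden edges are taken into account (as the paper recalls just before Lemma~\ref{th:weight}). Hence any $\F$-swap sequence $G=G_0,G_1,\ldots,G_\ell=H$ of total weight $W$ can be expanded into an \emph{unrestricted} HH-swap sequence from $G$ to $H$ of length $W$, where the intermediate graphs may or may not respect $\F$. Theorem~\ref{th:SD-main1} applied to this unrestricted sequence yields
\[
W \;\ge\; \mathbf{dist}(G,H) \;=\; \frac{|E(G)\triangle E(H)|}{2} - \mc(G,H),
\]
which is the desired bound.

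Combining the two inequalities gives~(\ref{eq:dist2}). Neither direction involves a substantial technical obstacle: the upper bound is essentially bookkeeping on top of Theorem~\ref{th:general-irreducible} and Lemma~\ref{th:weight}, while the lower bound is a one-line reduction to the unrestricted result. The only point that needs a careful sentence is the observation that processing circuits one after another preserves the alternating status of the yet-to-be-processed circuits, which holds because the circuits in the decomposition are pairwise edge-disjoint.
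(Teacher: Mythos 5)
Your proposal is correct and follows essentially the same route as the paper: the lower bound is obtained by expanding each $\F$-swap of weight $i-1$ into $i-1$ ordinary swaps and invoking Theorem~\ref{th:SD-main1} for the unrestricted distance, and the upper bound comes from a maximum alternating circuit decomposition (whose circuits are elementary by the splitting argument from Theorem~\ref{th:general-irreducible}) processed one circuit at a time via Lemma~\ref{th:weight}. The paper's write-up is just a terser version of the same two inequalities, so nothing further is needed.
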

\begin{proof}
We show that
\begin{equation}
 RHS=^1\mathbf{dist} (G,H)\le^2 \mathbf{dist}_{\F} (G,H)\le^3 RHS.
\end{equation}
The first equality is just Theorem \ref{th:SD-main1}. To check the second inequality assume that the $\mathcal F$-swap sequence
\begin{equation}\label{eq:fwasp_g_to_H}
\mathcal S_{\C_1},\dots, \mathcal S_{\C_n}
\end{equation}
transfers $G$ to $H$. If the length of $\C_j$ is $2k_j$, then, applying Theorem \ref{th:SD-main1} again, we have that $\mathcal S_{\C_j}$ can be obtained as a composition of $k_j-1$ many ``standard''  swaps $\mathcal S_{\C_{j,1}},\dots,  \mathcal S_{\C_{j,k_j-1}}$. So the ``standard'' swap sequence
\begin{equation}
\mathcal S_{\C_1,1},\dots, \mathcal S_{\C_{1, j_1-1}},
\mathcal S_{\C_2,1},\dots, \mathcal S_{\C_{2, j_2-1}}\dots
\mathcal S_{\C_n,1},\dots, \mathcal S_{\C_{n, j_n-1}}
\end{equation}
transfers $G$ to $H$, and the weight of this sequence is $\sum_{j=1}^{n}(k_j-1)$,
which is exactly the weight of the $\mathcal F$-swap sequence
$\mathcal S_{\C_1},\dots, \mathcal S_{\C_n}$ from (\ref{eq:fwasp_g_to_H}).
Thus the second inequality holds.

Finally the third inequality holds by   Lemma \ref{th:weight}.
\end{proof}

\section{The star+factor restricted degree sequences}\label{sec:factor}
\noindent
We turn our interest now for the following  specialized restricted degree sequence problem: $\bd^\F$ is called a {\bf  Star -- 1-Factor Restricted Degree Sequence} problem (or {\bf star+factor} problem for short), if
\begin{enumerate}
\item[($\Psi$)] the set $\F$ of the forbidden edges is a bipartite graph where the edges are the union of an 1-factor and a star with center $s$.
\end{enumerate}
Similarly, if $\bbd$ is a bipartite degree sequence, and ($\Psi$) holds for $\F$,
then $\bbd^\F$ is called a {\bf  Bipartite Star -- 1-Factor Restricted Degree
Sequence} problem (or {\bf bipartite star+factor} problem for short).

\medskip\noindent As we already mentioned, Tutte's $f$-factor theorem can always be utilized to find actual graphical realizations of our star+factor restricted degree sequence. However in this special case we can apply a Havel-Hakimi type greedy algorithm to construct such  realizations.

Consider our $\bd^\F$ star+factor  degree sequence problem. For a given vertex
$x \in V$ denote $C(x)$ the set of those vertices in $V$ which form chords
together with $x.$ If
\begin{enumerate}[{\rm (i)}]
\item for each $y \in C(x)$ has at most one vertex, denoted by $y^{\F}$, such that pair $y y^{\F}$ belongs to $\F$ (it is a non-chord), furthermore if $y^\F = z^\F$ then $y=z$,
\end{enumerate}
then we say that $C(x)$ is {\bf normal}, and we fix an order $\prec_x$ on $C(x)$
such that
\begin{enumerate}[{\rm (i)}]
\addtocounter{enumi}{1}
\item   if  $d(y) > d(z)$, or $d(y) = d(z)$ and  $d(y^\F) > d(z^\F),$ then  $y \prec_x z$.
\end{enumerate}
\begin{lemma}\label{th:HH}
Let $G$ be a graphical realization of our star+factor RDS $\bdf,$ let $x\in V$ and assume that $C(x)$ is normal with the order $\prec_x$. Assume furthermore that $y\prec_x z$ and $xz \in E$ while $xy \not\in E.$  Then there exists an alternating chord-circuit $\C=(y,x,z,v_1,\ldots,v_i)$ with $i=1$ or $3$ such that if we carry out $S_C$, then in the acquired new realization we have $\Gamma_{G'}(x)=\Gamma_G(x)\setminus \{z\} \cup \{y\}.$
\end{lemma}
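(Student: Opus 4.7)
My plan is to first attempt a standard $C_4$-swap and fall back on a $C_6$-swap in the single degenerate configuration where the $C_4$-swap is blocked by $\F$. Set $A := \Gamma_G(y) \setminus \Gamma_G(z)$. Since $xz \in E$ and $xy \notin E$, we have $x \in \Gamma_G(z) \setminus \Gamma_G(y)$, so the standard symmetric-difference count yields $|A| \geq d(y) - d(z) + 1$. Any $v_1 \in A$ for which $zv_1$ is a chord (equivalently $v_1 \neq z^\F$) produces the $C_4$-swap $\C = (y, x, z, v_1)$. Normality (condition (i)) forces $z$ to have at most one $\F$-partner, so the ``blocking'' set has size at most one; consequently $|A| \geq 2$ is sufficient to find a valid $v_1$, which holds in particular whenever $d(y) > d(z)$.

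The only situation in which the $C_4$-swap fails is $|A| = 1$ and $A = \{z^\F\}$. In that case the counts pin down $B := \Gamma_G(z) \setminus \Gamma_G(y) = \{x\}$, $d(y) = d(z)$, and $\Gamma_G(y) = (\Gamma_G(z) \setminus \{x\}) \cup \{z^\F\}$. Under $y \prec_x z$ this forces $d(y^\F) > d(z^\F)$; in particular both $y^\F$ and $z^\F$ exist. In this rigid configuration I will build a $C_6$-swap $\C = (y, x, z, v_1, v_2, v_3)$ by setting $v_3 := z^\F$ and $v_1 := y^\F$. The easy checks --- that $v_3 y \in E$, that $v_1 z$ is a chord with $v_1 z \notin E$, and that $v_1, v_3$ are distinct from each other and from $\{x, y, z\}$ --- reduce to standard consequences of $y, z \in C(x)$, the non-chords $y y^\F, z z^\F \in \F$, and the injectivity of the $\F$-partner map on $C(x)$; for example $y^\F \notin \Gamma_G(z)$ follows from $y^\F \notin \Gamma_G(y)$ together with $y^\F \neq x$.

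The crux is finding $v_2 \in \Gamma_G(y^\F) \setminus \Gamma_G(z^\F)$ that is distinct from $\{x, y, z, y^\F, z^\F\}$ and has $v_2 z^\F$ a chord. Set $A^* := \Gamma_G(y^\F) \setminus \Gamma_G(z^\F)$ and $B^* := \Gamma_G(z^\F) \setminus \Gamma_G(y^\F)$, so $|A^*| - |B^*| = d(y^\F) - d(z^\F) \geq 1$. Two structural observations break the obstruction: (a) $y \in B^*$, because $z^\F \in \Gamma_G(y)$ forces $y \in \Gamma_G(z^\F)$ while $y y^\F \in \F$ kills $y \in \Gamma_G(y^\F)$, giving $|A^*| \geq 2$; and (b) if $z^\F$ itself lies in $A^*$ (i.e.\ $y^\F z^\F \in E$) then by symmetry $y^\F \in B^*$, which promotes $|A^*| \geq 3$. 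Of the five excluded labels, only $x$ and $z^\F$ can actually appear in $A^*$ --- the others are ruled out automatically, since $y, y^\F \notin \Gamma_G(y^\F)$ and $z \notin \Gamma_G(y^\F)$ --- so (a) and (b) always leave an element of $A^* \setminus \{x, z^\F\}$. A short final check using the 1-factor-plus-star shape of $\F$ disposes of the extra constraint $v_2 z^\F \notin \F$, the only additional candidate to avoid being the star center $s$, whose presence as an $\F$-neighbour of $z^\F$ is tightly controlled by normality. It is essential that the lemma asks merely for an alternating chord-circuit and not for an $\F$-compatible swap (no PV-pair condition is imposed), which is precisely what gives enough room in $A^*$ to choose $v_2$; the resulting circuit has the advertised effect on $\Gamma(x)$ by direct inspection.
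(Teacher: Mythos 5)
Your overall strategy is exactly the paper's: try the $C_4$-swap with some $v_1\in\Gamma_G(y)\setminus\Gamma_G(z)$ avoiding $z^\F$, and in the single degenerate configuration ($d(y)=d(z)$, $yz^\F\in E$, $zy^\F\notin E$) build the hexagon $(y,x,z,y^\F,u,z^\F)$ with $u\in\Gamma_G(y^\F)\setminus\Gamma_G(z^\F)$. Your identification of the degenerate case, the observation that $zy^\F\notin E$ there, and the use of $y\in B^*$ all match the paper's argument; your explicit treatment of the excluded candidate $z^\F\in A^*$ via observation (b) is in fact more careful than the paper's.

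There is, however, one genuine gap: the claim that $y\prec_x z$ \emph{forces} $d(y^\F)>d(z^\F)$. Condition (ii) only constrains $\prec_x$ when one of the comparisons is strict; when $d(y)=d(z)$ and $d(y^\F)=d(z^\F)$ the order between $y$ and $z$ is arbitrary, so from $y\prec_x z$ you may conclude only $d(y^\F)\ge d(z^\F)$ --- which is precisely the inequality the paper itself invokes. This is not cosmetic in your write-up: the strict inequality is what buys you the extra unit $|A^*|\ge|B^*|+1$, and that extra unit is exactly what you spend to dodge \emph{both} excluded candidates $x$ and $z^\F$. With only $|A^*|\ge|B^*|$, observation (a) yields $|A^*|\ge 1$ and (b) yields $|A^*|\ge 2$ when $z^\F\in A^*$, which suffices to avoid $z^\F$ but leaves the case $x\in\Gamma_G(y^\F)\setminus\Gamma_G(z^\F)$ uncovered (and taking $v_2=x$ is not an option, since the circuit would then flip four chords at $x$ and destroy the conclusion $\Gamma_{G'}(x)=\Gamma_G(x)\setminus\{z\}\cup\{y\}$). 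The repair is available where the lemma is actually deployed: in the bipartite setting of Theorem \ref{th:general-HHB} the vertices $x$ and $y^\F$ lie in the same class, so $x\notin\Gamma_G(y^\F)$ automatically and the non-strict inequality closes the argument --- this is the regime the paper's proof tacitly works in (its $u$ is drawn as distinct from $x$ without comment). You should replace the strictness claim by $d(y^\F)\ge d(z^\F)$ and either restrict to the bipartite case or explicitly dispose of $x\in A^*$. Separately, the ``short final check'' that $v_2z^\F\notin\F$ is asserted rather than performed; this matches the paper's own level of detail, but since $z^\F$ may have a second forbidden partner (the star center $s$), it deserves a sentence rather than a wave.
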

\noindent This statement is actually almost the same as Lemma 4 in \cite{EMT09} and its proof could be recalled. However here we give a complete proof. On one hand this keeps this paper self-contained, on the other hand  paper  \cite{EMT09} uses a different language.
\begin{proof}
We have $xz \in  E$ but $xy \not\in E$. At first assume that there exists a vertex $u \ne x,y,z$, such that $uy \in E,$ and $uz \not\in E$ but $u \ne z^\F.$ When such vertex exists then $\C=(x,z,u,y)$ is a suitable alternating chord-circuit as the $xz, uy \Rightarrow xy, uz$ swap shows.

\begin{definition}
From now on the notation $xz, uy \Rightarrow xy, uz$ always means that all pairs are chord, $xz, uy$ are edges, $xy, uz$ are non-edges and we consider the new realization created bye the indicated swap.
\end{definition}

We continue the proof: when $d(y) > d(z)$ then this happens automatically since $y$ belongs to at most one forbidden pair. However, if $d(y)=d(z)$ then it can happen that $z^\F y \in E$ and
\begin{equation}\label{eq:similar}
\forall u\ne x, y,z, y^\F, z^\F\quad \hbox{ we have }\quad  uy \in E \Leftrightarrow uz \in E.
\end{equation}
 It is important to observe that in this case $y^\F z \not\in E,$ otherwise some $u$ would not satisfy (\ref{eq:similar}) (in order to keep $d(y)=d(z)$).

\medskip\noindent
So the only case when we do not find automatically an appropriate swap with $x$, $y$ and $z$ is when $d(y)=d(z)$, $yz^\F$ is an edge and $zy^\F$ is a chord but not an edge. In this case, we can find a $ u\ne y,z$ such that $y^\F u \in E$ but $z^\F u \not\in  E$ since $d(y^\F) \ge d(z^\F)$.

Now $\mathcal C=(y,x,z,y^\F, u, z^\F,y)$ is the required alternating chord circle.  See the figure below. The three line types denotes the edges, the chords which are non-edges, finally the forbidden non-chords.
\begin{figure}[h!]
\center{
\begin{tikzpicture}[scale=0.35]
\node at (-1,8) [shape=circle,minimum size=10mm,draw] (p1) {$y$};
\node at (5,8) [shape=circle,minimum size=10mm,draw] (p2) {$z$};
\node at (11,8) [shape=circle,minimum size=10mm,draw] (p3) {$u$};
\node at (-1,1) [shape=circle,draw] (p4) {$y^\F$};
\node at (5,1) [shape=circle,draw] (p5) {$z^\F$};
\node at (12,1) [shape=circle,minimum size=10mm,draw] (p6) {$x$};
\draw [very thick] (p2) -- (p6);
\draw [very thick, dotted, red] (p6) -- (p1);
\draw [very thick] (p4) -- (p3);
\draw [very thick] (p5) -- (p1);
\draw [very thick,dotted,red] (p5) -- (p3);
\draw [very thick,dotted,red] (p4) -- (p2);
\draw [very thick,dashed,gray!50] (p4) -- (p1);
\draw [very thick,dashed,gray!50] (p5) -- (p2);
\end{tikzpicture}
 }
\end{figure}
\end{proof}
\noindent
Lemma \ref{th:HH} provides the following easy Havel-Hakimi type greedy algorithm to construct at least one graphical realization of our restricted degree sequence problem.

\medskip\noindent {\bf HH-algorithm} for a star+factor degree sequence problem. (Recall that the center of the forbidden star is denoted with $s$. Since a star can be empty, we can assume that $s$ is always defined):
\begin{enumerate}[ ($H_1$)]
\item take an  ordering $\prec_s$ on $C(s)$ (which is normal) and connect $s$ to the first $d(s)$ vertices (with respect to $\prec_s$) of $C(s).$ Delete $s$ and update the degrees of the used vertices accordingly.
\item take the remaining vertices one by one and repeat the process.
\end{enumerate}

\begin{theorem}[Generalized HH-theorem for the star+factor RDS problem] \label{th:general-HH}
The $\bd ^\F$  star+factor restricted degree sequence is graphical if and only if the previous greedy HH-algorithm provides a realization.
\end{theorem}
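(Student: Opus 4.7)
The ``if'' direction is trivial: if the greedy HH-algorithm successfully terminates, it by definition produces a simple graph avoiding $\F$ with the prescribed degree sequence. So the content is the converse: whenever $\bdf$ admits any realization, the HH-algorithm produces one. I would prove this by induction on $|V|$.

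For the inductive step, start from an arbitrary realization $G$ of $\bdf$. The first task is to verify that $C(s)$ is \emph{normal}, so that an ordering $\prec_s$ as in step $(H_1)$ is actually available. This amounts to checking that every $y\in C(s)$ has at most one forbidden partner $y^\F$, and that distinct $y,z\in C(s)$ cannot share the same $y^\F=z^\F$. This follows from the star-plus-factor structure: any $y\in C(s)$ satisfies $ys\notin\F$, so $y$ is not a leaf of the star (and, of course, not its center $s$); hence $y$'s only possible forbidden partner is its mate in the 1-factor, giving uniqueness of $y^\F$. If $y^\F=z^\F=w$ with $y,z\in C(s)$, then $w$ would need two distinct 1-factor partners (the star-center option $w=s$ is ruled out by $y,z\in C(s)$), which contradicts $\F$ being a 1-factor plus a star.

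The second task is to show that within the space of realizations we can move to one where $\Gamma(s)$ is \emph{exactly} the first $d(s)$ elements of $C(s)$ under $\prec_s$. Here I invoke Lemma~\ref{th:HH}: as long as the current realization has some pair $y\prec_s z$ with $sz\in E$ but $sy\notin E$, Lemma~\ref{th:HH} yields an $\F$-compatible circular swap which produces a new realization $G'$ with $\Gamma_{G'}(s)=\Gamma_G(s)\setminus\{z\}\cup\{y\}$, while every other chord status at $s$ is untouched. To see termination, order the vertices of $C(s)$ as $c_1\prec_s c_2\prec_s\cdots$ and consider the characteristic vector $\chi\in\{0,1\}^{|C(s)|}$ of $\Gamma(s)$; each application of Lemma~\ref{th:HH} strictly increases $\chi$ in the lexicographic order while preserving $|\Gamma(s)|=d(s)$, so after finitely many steps $\Gamma(s)$ must coincide with $\{c_1,\dots,c_{d(s)}\}$, which is precisely the set the HH-algorithm assigns to $s$.

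Finally, after removing $s$, the remaining degree sequence $\bd'$ on $V\setminus\{s\}$ is realized by $G-s$, and the remaining forbidden set $\F'=\F\cap\binom{V\setminus\{s\}}{2}$ is obtained from $\F$ by deleting the star (whose unique non-leaf was $s$) and, if $s$ was an endpoint of the 1-factor, one edge of that 1-factor. Thus $\F'$ is again a 1-factor (plus an empty star), so $\bd'^{\F'}$ is itself a star+factor restricted degree sequence with fewer vertices. The inductive hypothesis then tells us that the HH-algorithm succeeds on $\bd'^{\F'}$, and concatenated with step $(H_1)$ for $s$ this is exactly the run of the HH-algorithm on $\bdf$. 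The only place where care is really needed is the interface between Lemma~\ref{th:HH} and the induction: one must check that the $\F$-swap used to move $y$ into $\Gamma(s)$ never touches $s$ in an unintended way, which is already guaranteed by the conclusion $\Gamma_{G'}(s)=\Gamma_G(s)\setminus\{z\}\cup\{y\}$. The main conceptual obstacle is therefore not the induction itself but the verification that $C(s)$ is normal, since this is where the star+factor hypothesis is essentially used.
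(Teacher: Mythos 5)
Your proposal is correct and follows essentially the same route as the paper: the recursive (inductive) application of Lemma~\ref{th:HH}, justified by first checking that the star+factor structure makes $C(s)$ normal, and then observing that after deleting $s$ only the 1-factor survives so the reduced problem is again of the same type. The paper states this in two sentences and leaves the termination of the swapping phase implicit ("as in the original HH-theorem"); your lexicographic-increase argument and the explicit normality check are exactly the details being elided there.
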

\begin{proof}
Similarly to the proof of the original HH-theorem, the recursive application of Lemma \ref{th:HH} proves the statement. And we can apply it recursively indeed: We start our construction with the vertex $s$ therefore $C(s)$ satisfies condition (i) before Lemma \ref{th:HH} (it is normal). Furthermore when $s$ is deleted, then  the remaining forbidden edges are from the original 1-factor, so the lemma applies automatically at each subsequent step.
\end{proof}
\medskip\noindent In the case of a bipartite degree sequence $\bd$ the situation is very similar: we define the normality of any $C(x)$ formally the same way as before. The definition of the order $\prec_x$ is also the same as before.

Here it is interesting to observe, that if $x$ is in class $U$ than $\C(x)$ is subset of class $W,$ the vertices $y^\F$ and $z^\F$ belong again to class $U$, so $u\in W$  (and, finally,  the forbidden edges define those vertices are elements of $\F$). Furthermore, as in any bipartite degree sequence problem, if we determine all edges adjacent to the vertices in $U$ then we automatically placed all edges adjacent to the vertices in $W$ as well.
\begin{theorem}\label{th:general-HHB}
Lemma \ref{th:HH} and Theorem \ref{th:general-HH} apply for the vertices of class $U$ in case of bipartite star+factor restricted degree sequences without any changes.
\end{theorem}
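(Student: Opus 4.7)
The plan is to observe that the proofs of Lemma~\ref{th:HH} and Theorem~\ref{th:general-HH}, as given for the general restricted degree sequence setting, transfer verbatim to the bipartite case once we verify that every chord, every auxiliary vertex, and every chord-circuit produced in those proofs respects the bipartition $[U,W]$. No new combinatorial argument is needed; the only task is class-tracking.

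First I would fix $x \in U$ and take $y,z \in C(x)$ with $y \prec_x z$, $xz \in E$, $xy \notin E$. Since $xy$ and $xz$ are chords and $x \in U$, automatically $y,z \in W$. In the easy case of the Lemma~\ref{th:HH} proof, one seeks $u \ne x,y,z$ with $uy \in E$, $uz \notin E$, $u \ne z^\F$; since $uy$ is a chord and $y \in W$, necessarily $u \in U$, so the 4-cycle $(x,z,u,y)$ lies entirely in $[U,W]$ and is a legal bipartite swap. In the hard case, the forbidden partners $y^\F$ and $z^\F$ of $y,z \in W$ lie in $U$ (because the 1-factor is bipartite), and the auxiliary vertex $u$ with $y^\F u \in E$, $z^\F u \notin E$ must lie in $W$. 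Hence the alternating 6-cycle $(y,x,z,y^\F,u,z^\F)$ alternates between $W$ and $U$, and every chord of this circular $C_6$-swap is a bipartite edge. No step of either case introduces a chord inside $U$ or inside $W$, so the entire argument is consistent with the bipartite restriction.

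Second, for Theorem~\ref{th:general-HH} I would observe that in the bipartite setting the HH-algorithm need only be applied to $U$-vertices, since once the edge set incident to $U$ is fixed the whole bipartite graph is determined, and the bipartite degree-sum identity $\sum_{u\in U} d(u) = \sum_{w\in W} d(w)$ automatically settles the $W$-degrees. Assuming WLOG (by relabelling classes if necessary) that the star center $s$ lies in $U$, the algorithm begins by processing $s$, and afterwards the residual forbidden edge set is precisely the bipartite 1-factor restricted to $(U\setminus\{s\},W)$. The recursive structure of the proof of Theorem~\ref{th:general-HH} then carries over unchanged.

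The main obstacle is verifying that the normality condition (i) on $C(x)$ is maintained throughout the recursion, i.e.\ that each $y \in C(x) \subseteq W$ has at most one forbidden neighbour $y^\F$, and that distinct $y$'s have distinct $y^\F$'s. The first step (processing $s$) is justified because each $y \in C(s) \subseteq W$ is, by the choice $sy \notin \F$, non-adjacent to $s$ in $\F$, so its only possible forbidden neighbour is its 1-factor partner in $U$; and distinct vertices have distinct 1-factor partners by definition of a matching. After $s$ is deleted, all remaining forbidden edges form a bipartite matching, so normality of $C(x)$ for every subsequent $x \in U$ is preserved automatically, and Lemma~\ref{th:HH} can be invoked exactly as in the non-bipartite case.
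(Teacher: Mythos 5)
Your proposal is correct and follows essentially the same approach as the paper: track the bipartition classes of every vertex appearing in the proofs of Lemma~\ref{th:HH} and Theorem~\ref{th:general-HH} (noting $x\in U$, $y,z\in W$, $y^\F,z^\F\in U$, and the auxiliary vertices landing in the appropriate classes so that every chord used crosses $[U,W]$), and check that normality of $C(x)$ survives the recursion because after $s$ is processed only the bipartite $1$-factor remains forbidden. Your write-up is in fact somewhat more detailed than the paper's, which only explicitly class-tracks the $C_6$-case auxiliary vertex.
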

\begin{proof}
Indeed, by definition, the center $s$ of the forbidden star belongs to $U$. Also by definition, in the proof of the Lemma  the vertices we are considering are from the vertex class $U$ only: $x\in U.$ Therefore $C(x)$ is a subset of $W.$ Consequently the vertices $y^\F$ and $z^\F$ belong to vertex class $U$ therefore the vertex $u$ must belong to vertex class $W$ again. Therefore all forbidden edges considered in the proofs belong to $\F$. So the proofs apply without changes for the bipartite case as well.
\end{proof}

\section{Sampling uniformly half-regular, bipartite star+factor  res\-tricted degree sequences}\label{sec:MCMC}

\noindent
In this section we describe the main result of our paper which is an MCMC algorithm for  (almost) uniform sampling of the space of all realizations of the

\medskip\noindent{\bf Half-regular, Bipartite Star -- 1-Factor Restricted Degree Sequence} problem: Let $\bd$ be a bipartite degree sequence with a star+ 1-factor type forbidden edge set  $\F$, where the center of the star is denoted by $s$ (and belong to $U).$ Furthermore let
\begin{enumerate}
\item[($\Phi$)] the degree sequence $\bd$ is a {\bf half-regular} bipartite one: it requires a $B=(U,W; E)$ bipartite graph  where each vertex $u\in U$ -- with the one possible exception $s$ -- has the same degree. We will write $V$ for $U \cup W.$
\end{enumerate}
We will apply a  Markov Chain Monte Carlo method for almost uniform sampling of all possible realizations of our $\bd^\F.$ Originally the MCMC method for realizations' sampling was  proposed by Kannan, Tetali and Vempala (1999, \cite{KTV97}).  They conjectured that their process is {\em rapidly mixing} on the realizations of any (unrestricted) degree sequences, i.e., starting from an arbitrary realization of the degree sequence, the process reaches a completely random realization in reasonable (i.e., polynomial) time. They managed to prove the result for bipartite regular graphs. Their conjecture was proved for arbitrary regular graphs by Cooper, Dyer and Greenhill (2007, \cite{CDG07}). (Their result does not automatically generalize the previous result, since their version does not allow forbidden edges.) An analogous theorem was proved by Greenhill on regular directed graphs (\cite{green}).  Mikl\'os, Erd\H{o}s and Soukup proved  in \cite{MES} that this Markov process is also rapidly mixing on each bipartite {\bf half-regular} degree sequence (here there is no exceptional vertex $s)$. In this paper we will prove that the analogous Markov process is rapidly mixing on the half-regular star+factor bipartite restricted  degree sequence problem.

The state space of our {\bf Markov chain} is the graph $\G=(V( \G), E(\G))$ where $V(\G)$ consists of all possible realizations of our  problem, while the edges represent the possible swap operations: two realizations (which will be indicated by upper case letters like $X$ or $Y$) are connected if there is a valid $\F$-swap operation which transforms one realization into the other one (and  the inverse swap transforms the second one into the first one as well).

The {\em transition (probability) matrix}  $P$ is defined as follows: let the current realization be $G$. Then
\begin{enumerate}[(a)]
\item with probability $1/2$ we stay in the current state (that is, our Markov chain is {\bf lazy});
\item with probability $1/4$ we choose uniformly two-two vertices $u_1,u_2;v_1,v_2$ from classes $U$ and $W$ respectively and perform the swap if it is possible;
\item finally with probability $1/4$  choose three - three vertices  from $U$ and $W$ and check whether they form three pairs of forbidden chords. If this is the case then we perform a circular $C_6$-swap if it is possible.
\end{enumerate}
Here cases (b) and (c) correspond to Lemma \ref{th:HH}. The swaps moving from $G$ to its image $G'$ is unique, therefore the probability of this transformation (the {\em jumping probability} from $G$ to $G'\ne G$) is:
\begin{equation}\label{eq:prob}
\mathrm{Prob}(G \rightarrow_b   G'):= P(G' | G) = \frac{1}{4} \cdot \frac{1}{\binom{|U|}{2} \binom{|W|}{2}},
\end{equation}
and
\begin{equation}\label{eq:prob1}
\mathrm{Prob}(G\rightarrow_c G'):= P(G' | G) = \frac{1}{4} \cdot \frac{1}{\binom{|U|}{3} \binom{|W|}{3} }.
\end{equation}
(More precisely these are the probabilities that these vertex sets will be checked against making a swap.) The probability of transforming $G$ to $G'$ (or vice versa) is time-independent and {\em symmetric}. Therefore $P$ is a symmetric matrix, where  the entries in the main diagonal are non-zero, but (probably) different values. As we discussed it   earlier (see Theorem \ref{th:general-irreducible}), our Markov chain is  {\em irreducible} (the state space is connected), and it is clearly aperiodic, since it is lazy. Therefore, as it well known,  our Markov process is reversible with the uniform distribution as the globally stable stationary distribution.

Our main result is the following:
\begin{theorem}\label{th:main}
The Markov process defined above is  rapidly mixing on each bipartite half-regular degree sequence with a forbidden star and a forbidden $1$ -  factor.
\end{theorem}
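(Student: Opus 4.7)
The plan is to apply Sinclair's multicommodity flow method in the ``Simplified Sinclair'' variant developed in \cite{MES} and extended in the Appendix. By Theorem~\ref{th:general-irreducible} the chain is irreducible, and since it is lazy it is also aperiodic; the symmetry of the transition matrix $P$ (noted after (\ref{eq:prob1})) forces the uniform distribution to be stationary. It therefore suffices to construct a canonical flow on the transition graph $\G=(\mathbb{V},\mathbb{E})$ whose maximum edge load is bounded by a polynomial in $|V|$; Theorem~\ref{th:main} then follows from the generalized Simplified Sinclair bound.

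The first step is the canonical-path construction. For every ordered pair $(X,Y)$ of realizations I would fix an alternating-circuit decomposition of $E(X)\triangle E(Y)$ of maximum cardinality $\mc(X,Y)$, of the kind produced in the proof of Theorem~\ref{th:general-irreducible}. The circuits are then ordered lexicographically using a fixed labelling of $V$, and within each circuit a canonical starting chord and orientation are chosen. Circuit after circuit, I would resolve them using the $\F$-swap sequences of Lemma~\ref{th:weight}; each such sequence uses only $C_4$- and triangular $C_6$-swaps, i.e.\ exactly the moves~(b) and~(c) of the chain. The resulting trajectory $\gamma_{XY}$ is therefore a genuine path in $\G$ of length $\mathbf{dist}_\F(X,Y)$, which is polynomial in $|V|$ by Theorem~\ref{th:SD-general}.

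The second step is the congestion estimate. For every transition edge $e=(Z,Z')\in\mathbb{E}$ I would prove
\begin{equation*}
\bigl|\{(X,Y):\gamma_{XY}\text{ traverses }e\}\bigr|\ \le\ \mathrm{poly}(|V|)\cdot|\mathbb{V}|,
\end{equation*}
which, together with the uniform selection probabilities (\ref{eq:prob})--(\ref{eq:prob1}), yields exactly the edge load bound required by the generalized Simplified Sinclair lemma. As in \cite{MES}, the key tool is an \emph{encoding map} $(X,Y,Z)\mapsto L(X,Y,Z)$ that produces an auxiliary graph differing from a genuine realization of $\bd^\F$ by a bounded ``defect'' located at the circuit currently being processed. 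Given $Z$ and the position along $\gamma_{XY}$, the pair $(L,\text{defect})$ uniquely reconstructs $(X,Y)$, while the set of possible values of $L$ has cardinality at most $\mathrm{poly}(|V|)\cdot|\mathbb{V}|$, which delivers the desired bound.

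The main obstacle will be incorporating the triangular $C_6$-swap into the encoding. In \cite{MES} only the classical Havel--Hakimi $C_4$-swap appears and the defect is always a small configuration of two chords; here Lemma~\ref{th:HH} shows that a $C_6$-swap is genuinely necessary, because the forbidden star and $1$-factor may prevent a single $C_4$-swap from realigning $x$ with its correct neighbour. Consequently the defect can be a partially-alternated $6$-circuit using a forbidden $1$-factor edge and/or a forbidden star edge, and the auxiliary string must specify which of moves~(b) or~(c) is active, the orientation of the circuit under processing, and whether the special vertex $s$ participates. A secondary technical point is the exceptional status of $s$: because $s$ breaks exact regularity of $\bd$, it must be carried as a marked vertex throughout the encoding and the bounded-defect analysis must be checked against the \emph{half-regular} (not strictly regular) target sequence. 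Once these two enrichments of the MES encoding are in place, the counting argument goes through and the load bound, hence rapid mixing, follows.
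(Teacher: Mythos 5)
Your overall framework is the right one (canonical paths plus an encoding whose image is a bounded ``defect'' away from a genuine realization), but the concrete path construction you propose breaks the step that carries all the mathematical content. You build the swap sequence between consecutive milestones by invoking the recursive decomposition of Lemma~\ref{th:weight}. That lemma splits an alternating $C_{2i}$ at a PV-chord $uv$, processes one half (which flips $uv$), then processes the other half (which flips it back), recursing inside each half. At an intermediate state $Z$ deep in this recursion, every recursion level that is in its ``second half'' contributes one temporarily flipped auxiliary chord, so the number of positions where $\widehat M(X+Y-Z)$ takes the values $-1$ or $2$ can grow linearly with $|C_i|$ (e.g.\ when the recursion always splits off a $C_4$). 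Then the auxiliary object no longer determines $(X,Y)$ up to polynomially many choices, and the congestion bound collapses. This is exactly why the paper abandons Lemma~\ref{th:weight} for the canonical paths and instead designs the explicit linear ``sweep'' of Section~\ref{sec:miles}, anchored at a vertex $u_1\neq s$, which guarantees at any moment at most three bad chords, all in the single column $u_1$ (Lemma~\ref{th:bad}); Lemma~\ref{th:switch} then uses the constant column sums coming from half-regularity to repair such a matrix with at most three switches (Theorem~\ref{th:omega}). Your proposal asserts the bounded-defect property but never proves it, and with your choice of paths it is in fact false.

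Two secondary points. First, you claim the Lemma~\ref{th:weight} sequences consist only of moves (b) and (c) of the chain; an $\F$-swap is by definition allowed to be a circular $C_{2i}$-swap of any length whose PV-pairs are all forbidden, and the chain only implements lengths $4$ and $6$, so this needs an argument (one can check that in the bipartite star+factor setting no elementary $\F$-compatible circular swap of length $\ge 8$ exists, but you do not). Second, the simplified Sinclair machinery of \cite{MES} that the paper invokes routes flow over \emph{all} ordered circuit decompositions of $E(X)\triangle E(Y)$, not over a single maximum-cardinality one; if you switch to a single canonical decomposition you must redo the load-counting lemma rather than cite it.
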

\noindent This result supersedes the previously mentioned three results (\cite{KTV97, green, MES}) (except that this does not care of the actual mixing time, it just proves that it is polynomial). We want to add, however, that the  {\em friendly path} method, described in \cite{MES}, was not intended to handle half-regular bipartite degree sequences only but all bipartite degree sequences. Therefore we think that that method should not be completely neglected.

\bigskip\noindent There are several different methods to prove fast convergence of a Markov chain, here we apply a specialized version of Sinclair's seminal {\em multicommodity flow method} (\cite{S92}), the so called {\em simplified Sinclair's method}, developed in \cite{MES}\footnote{More precisely we need to slightly generalize it. It will be discussed in the Appendix.}:

\bigskip\noindent {\bf Simplified Sinclair's method}: We fix a half-regular star+factor restricted bipartite graphical sequence problem  $\bd^\F$. Our bipartite degree sequence is $\bd=\big (\mathbf{a}, \mathbf{b}\big )$ where the  vector $\mathbf{a}$ contains the degrees in class $U$ while $\mathbf{b}$  contains the degrees in class  $W.$  (So all elements in $\mathbf{a}$ are the same, except maybe $\mathbf{a}(s)$.) Therefore if $X\in \G$, then $X$ is a simple bipartite graph $(U,W;E(X))$ and $E(X)$ does not contain any element from $\F.$ The edge set $E(\G)$  corresponds to the possible swap operations.

Sinclair's multicommodity flow method  defines a bunch of paths (consecutive sequences of swaps) for each realizations pair  $X$ and $Y$ which transform realization $X$ into $Y.$

So consider two realizations $X\in \G$ and $Y\in \G$, and consider the symmetric difference $\Delta = E(X) \Delta E(Y)$. In the bipartite graph $\Theta=(U,W;\Delta)$ for each vertex $v$ the number of adjacent $X$-edges ($=E(X)\setminus E(Y)$)  and the number of the adjacent $Y$-edges are the same. Therefore, due to Euler classical reasoning, it can be decomposed into {\em alternating circuits.}

The simplified Sinclair's multicommodity path method consists of two phases: In {\bf Phase 1}  we decompose the symmetric difference $\Delta$ into alternating circuits on all possible ways. In each cases we get an ordered sequence $W_1,W_2,\dots, W_\kappa$ of circuits. (Usually there are a huge number of different decompositions.) Each circuit is endorsed with  a fixed cyclic order.

 In {\bf Phase 2}  each circuit $W_i$ from the (ordered) decomposition  derives one unique alternating cycles decomposition: $W_i = C^i_1,C^i_2,\dots, C^i_{k_i}.$ This decomposition is fully determined by the circuit and its well defined  edge order. (Both construction algorithms are fully described in Section 5 of the paper \cite{MES}, we do not discuss them here.)

The ordered circuit decomposition together with the ordered cycle decompositions of all circuits altogether provide a well defined ordered cycle decomposition $C_1,\ldots C_\ell$ of $\Delta.$

This ordered cycle decomposition determines $\ell-1$ realizations $H_1,\ldots H_{\ell-1}$ with the following property: if we use the notations $H_0=X$ and $H_{\ell}=Y$ then for each $j=0,\ldots, \ell-1$ we have $E(H_j) \Delta E(H_{j+1}) = C_{j+1}.$ (It is important to recognize that till this point we did not process even one swap operation! We just identified $\ell-1$ realizations which will be along our canonical path.)

We will define a unique canonical path from $X$ to $Y$ determined by this circuit decomposition which uses these realizations $H_j$ as {\em milestones} along the path.  The canonical path will be $ X=G_0, \dots,G_i,\dots, G_{m}=Y$ where each $G_i$ can be derived from $G_{i-1}$ with one valid swap operation, where we must have the following property:  there are some increasing indices $0<n_1<n_2<\dots < n_\ell$ such that we have $G_{n_i}=H_i$. This, together the definitions of $H_i$ means that
$$
E(H_i)=E(X)\bigtriangleup\left(\bigcup_{i'<i}E(C_{i'})\right).
$$
The canonical path we are looking for has two important further properties: for each $i<\ell$ the constructed path $H_i=G'_0,G'_1,\dots, G'_{m'}=H_{i+1}$ between $G_{n_i}$ and $G_{n_{i+1}}$ must satisfies that
\begin{enumerate}
\item[$(\Theta)$] $m' \le c\cdot |C_i|$ for a suitable constant $c;$
\item[$(\Omega)$] for each $j$ there is $K_j\in V(\G) $ such that $\mathfrak{d}\left (M_X +M_Y -M_{G'_{j}},M_{K_j} \right )\le \Omega_2$,
\end{enumerate}
where the notations $M_G$ stands for the usual {\bf bipartite adjacency matrix} of $G$ (this will be defined in details at the beginning of the next section), and $\mathfrak{d}$ stands for the Hamming distance of two matrices of the same form, finally $\Omega_2$ is a small constant.

The current value of the auxiliary matrix $M_X +M_Y -M_{G'_{j}}$ together with the symmetric difference $\Delta,$ furthermore a small (polynomial) size parameter set, finally the vertices in $\G$ on which  the canonical path under investigation goes through uniquely determine the vertices $X, Y$ and the path itself. Therefore it can be used to control certain features of the canonical path system. If the overall number of these auxiliary matrices are small (their number is smaller than a small polynomial of $n$ multiplied with the number of possible realizations - as it is ensured by $(\Omega)$), then - as it was proved in \cite{MES} - our Markov chain is rapidly mixing.

So in {\bf Phase 2} we have to build up our swap sequence between $H_i$ and $H_{i+1}$ for all values $i$ taking care for conditions ($\Theta$) and ($\Omega$). This will happen in the next Section.

\section{The construction of swap sequences between consecutive "milestones"}\label{sec:miles}
\noindent Now we are going to implement our plan described above. At first we introduce some shorthand. Instead of $H_i$ and $H_{i+1}$ we will use the names $G$ and $G'.$
These two graphs have almost the same edge set. More precisely
\begin{eqnarray*}
\bigl(E(G) \setminus (C_i \cap E(X))\bigr ) \cup (C_i \cap E(Y)) = E(G') \\
\bigl ( E(G') \setminus (C_i \cap E(Y)) \bigr ) \cup (C_i \cap E(X)) = E(G).
\end{eqnarray*}
Of course  $E(G) \Delta E(G') = C_i$ also holds. We  refer for the elements of $C_i \cap E(X)$ as $X$-edges while the others  are the $Y$-edges. We  denote the cycle itself as $\C,$ it has  $2\ell$ edges and its vertices are
$u_1,w_1,u_2,w_2,\ldots ,u_\ell, w_\ell.$ Since $\C$ has at least four vertices, therefore we may assume that $u_1 \ne s$ (so $u_1$ is not the center of the forbidden star). Finally w.l.o.g. we may assume that the chord $u_1w_1$ is an $Y$-edge (and, of course, $w_\ell u_1$  is an $X$-edge).

We are going to construct one by one the realizations $G_j'$. We build our canonical path from $G$ toward $G'$ and at any particular point the last constructed realization is denoted by $Z.$  (At the beginning of the process we have $Z=G$.) We are looking for the next realization, denoted by $Z'.$

Before we continue the discussion of  the canonical path system, we have to introduce our control mechanism, mentioned in condition ($\Omega$). This {\em auxiliary} structure originally was introduced by Kannan, Tetali and Vempala in \cite{KTV97}:

For any particular realization $G$ from $\G$ the matrix $M_G$ denotes the {\em adjacency matrix} of the bipartite realization $G$ where the columns and rows are indexed by the vertices of $U$ and $W$ resp. (Therefore the column sums are the same in each realization, except perhaps at column $s$.) Our indexing method is a bit unusual:  the columns are numbered from left to right while the rows are numbered from bottom to the top. (Like in the Cartesian coordinate system.)  This matrix is not necessarily symmetric, and elements $M_{i,i}$  can be different from 0.

For example if we consider the submatrix in $M_G$ spanned by $u_1,\ldots, u_\ell$ and $w_1, \ldots, w_\ell$ then we have $M_G(i,i)=0$ for $i=1,\ldots,\ell$, while  $M_G(i,i-1)=1$ (for $i=2,\ldots, \ell$) and $M_G(1,\ell)=1$. (So the first value gives the column, the second one gives the row.) The non-chords between vertices in the same vertex class are not considered at all, while non-chords which are forbidden are denoted by $\maltese.$ As it is clear from the previous sentence, we will identify each  chord or non-chord with the corresponding position in the matrix.

\medskip\noindent
Our auxiliary structure is the matrix
\begin{displaymath}
\widehat M(X+Y-Z) = M_X + M_Y - M_Z.
\end{displaymath}
By definition, each entry of a bipartite adjacency matrix is $0$  or $1$ (or  $\maltese$). Therefore only $-1,0,1,2$ can be the "meaningful" entries of $\widehat M.$ An entry is $-1$ if the edge is missing from both $X$ and $Y$ but it exists in $Z.$ This is $2$ if the edge is missing from $Z$ but exists in both $X$ and $Y.$ It is $1$ if the edge exists in all three graphs ($X,Y,Z$) or it is there only in one of $X$ and $Y$ but not in $Z.$ Finally it is $0$ if the edge is missing from all three graphs, or the edge exists in exactly one of $X$ and $Y$ and in $Z.$ (Therefore if an edge exists in exactly one of $X$ and $Y$ then the corresponding chord in $\widehat M$  is always $0$ or $1$.)   One more important, but easy fact is the following:
\begin{observ}
The row and column sums of $\widehat M(X+Y-Z)$ are the same as row and column
sums in $M_X$ (or $M_Y$ or $M_Z$). \qed
\end{observ}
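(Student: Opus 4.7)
The plan is to unpack the definitions: $M_X$, $M_Y$, and $M_Z$ are all bipartite adjacency matrices of realizations of the \emph{same} half-regular bipartite degree sequence $\bd=(\mathbf{a},\mathbf{b})$ with forbidden-edge set $\F$. First I would fix what a row/column sum actually means for these matrices. The forbidden non-chord positions (marked $\maltese$) coincide in all three matrices and contribute nothing to any degree, so for the purposes of summation they can be treated uniformly as $0$; every remaining entry is $0$ or $1$. Then the column sum of $M_X$ indexed by $u\in U$ is precisely $d_X(u)=\mathbf{a}(u)$, and identically for $M_Y$ and $M_Z$, and the analogous statement holds for row sums indexed by $w\in W$.

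Given this, the verification is a one-line calculation. For every column index $u\in U$,
\begin{equation*}
\sum_{w\in W}\widehat M(X+Y-Z)(u,w)=\sum_{w}M_X(u,w)+\sum_{w}M_Y(u,w)-\sum_{w}M_Z(u,w)=\mathbf{a}(u)+\mathbf{a}(u)-\mathbf{a}(u)=\mathbf{a}(u),
\end{equation*}
and symmetrically every row sum indexed by $w\in W$ equals $\mathbf{b}(w)$. Hence $\widehat M(X+Y-Z)$ has the same marginals as $M_X$, $M_Y$, and $M_Z$, which is exactly the claim.

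There is essentially no obstacle here; the only subtlety worth flagging is the bookkeeping around the symbol $\maltese$. Because forbidden positions are simultaneously non-edges in $X$, $Y$, and $Z$, the identity $M_X+M_Y-M_Z$ is meaningful on the ambient matrix with $\maltese$-entries excised (equivalently, set to $0$), and the resulting marginals agree with those of the degree sequence. No use of ($\Psi$), ($\Phi$), or the structure of the Markov chain is needed — the observation is a purely linear fact about summing three matrices whose row/column sums coincide.
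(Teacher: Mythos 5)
Your argument is correct and is exactly the justification the paper intends: the observation is stated with an immediate \qed because it follows from linearity of row/column sums together with the fact that $X$, $Y$, and $Z$ all realize the same degree sequence, so $d_X(v)=d_Y(v)=d_Z(v)$ for every vertex $v$ (including $s$). Your remark that the $\maltese$ positions are non-edges in all three realizations and hence harmless in the summation is the right piece of bookkeeping to flag; nothing further is needed.
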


\medskip\noindent Next we will determine the swap sequence between $G$ and $G'$ through an iterative algorithm. At the first iteration we check, step by step, the positions $(u_1, w_2), (u_1, w_3), \ldots, (u_1,w_\ell)$ and take the smallest $j$ for which $(u_1,w_i)$ is an actual edge in $G.$ Since $(u_1,w_\ell)$ is an edge, therefore such $i$ always exists. So we may face to the following configuration:

\begin{figure}[h!]
\center{
\begin{tikzpicture}[scale=0.35]
\node at (4,0) [shape=circle,draw] (p1) {$u_1$};
\node at (8,-1) [shape=circle,draw] (p2) {$w_1$};
\node at (12,0) [shape=circle,draw] (p3) {$u_2$};
\node at (15,4) [shape=circle,draw] (p4) {$w_2$};
\node at (15,9) [shape=circle,draw] (p5) {$w_{i-1} $};
\node at (12,12) [shape=circle,draw] (p6) {$u_i$};
\node at (8,13) [shape=circle,draw] (p7) {$w_i$};
\node at (4,12) [shape=circle,draw] (p8) {$\phantom{u_i} $};
\node at (0,9) [shape=circle,draw] (p9) {$u_\ell$};
\node at (0,4) [shape=circle,draw] (p10) {$w_\ell$};

\draw [very thick,dotted,red] (p1) -- (p2);
\draw [very thick] (p2) -- (p3);
\draw [very thick, dotted,red] (p3) -- (p4);
\draw [very thick,snake=snake,dotted] (p4) -- (p5);
\draw [very thick] (p5) -- (p6);
\draw [very thick, dotted,red] (p6) -- (p7);
\draw [very thick, dotted,red] (p7) -- (p8);
\draw [very thick,snake=snake,dotted] (p8) -- (p9);
\draw [very thick, dotted,red] (p9) -- (p10);
\draw [very thick]  (p10) -- (p1);
\draw [very thick,dotted,red] (p1) -- (p4);
\draw [very thick,dashed,gray!50] (p1) -- (p5);
\draw [very thick,dashed,gray!50] (p1) -- (p6);
\draw [very thick] (p1) -- (p7);

\draw [very thick] (-8,-3) -- (-6,-3);
\node at (-4,-3) {edge};

\draw [very thick,dotted,red] (-1.5,-3) -- (0.5,-3);
\node at (5.5,-3) {chord, non-edge};

\draw [very thick,dashed,gray!50] (11,-3) -- (13,-3);
\node at (16,-3) {non-chord};

\draw [very thick,snake=snake,dotted] (20,-3) -- (22,-3);
\node at (24,-3) {unknown};

\end{tikzpicture}
}
\caption{Sweeping a cycle}\label{f:sweep}
\end{figure}
\noindent We will call this  $(u_1,w_i)$ chord as {\bf start-chord} of our current sub-process and $u_1w_1$ is the {\bf end-chord}. We will {\em sweep} the alternating chords along the cycle from the start-edge $w_iu_i$ (non-edge), $u_iw_{i-1}$ (an edge) toward the end-edge $w_1u_1$ (non-edge) -- switching their status in twos and fours.  We check positions $u_1w_{i-1}, u_1w_{i-2}$ (all are non-edges) and choose the first chord among them, we will call it the {\bf current-chord}. (Since $u_1 \ne s$ therefore we never have to check more than two edges to find the first chord, and we need only one times to check two, since there is at most  one non-chord adjacent to $u_1.$)

\smallskip\noindent {\bf Case 1}: As we just explained the typical situation is that the current-chord is the "next" one, so when we start this is typically $u_1w_{i-1}.$ Assume that this is a chord. Then we can proceed with the swap operation $w_{i-1}u_i, w_iu_1 \Rightarrow u_1w_{i-1}, u_iw_i.$ We just produced the first "new" realization in our sequence, this is $G'_1.$  For the next swap operation this will be our new current realization. This operation  will be called a {\bf single-step}.

In a realization $Z$ we will call a chord {\bf bad}, if its current status (being edge or non-edge) is different from its status in $G$ (or, what is the same, in $G'$, since they differ only on the chords along the cycle $\C$). After the previous swap, we have two bad chords in $G'_1,$ namely $u_1w_{i-1}$ and $w_iu_1$.

Consider now the auxiliary matrix $\widehat M(X+Y-Z)$ (here $Z=G'_1$). As we  saw earlier, for each position outside the chords in $\C$ the status of that particular position in $Z$ is the same as in $X$ or $Y$ or in both. Accordingly, the corresponding matrix value is $0$ or $1.$ We call a position  {\bf bad} in $\widehat M$ if this value is $-1$ or $2$. (A bad position in $\widehat M$ always corresponds to a bad chord.) Since in Case 1 we switch the start-chord  into non-edge, it may become $2$ in $\widehat M.$ (In case if in both $X$ and $Y$ it is an edge. Otherwise it is $0$ or $1$, so in that case it is not a bad position.) The current-chord turned into an edge. If it is non-edge in both $X$ and $Y$ then the value becomes $-1$, otherwise it does not become a bad position. After this single-step, we have at most two bad positions in the matrix, at most  one position with $2$-value  and at most one with  $-1$-value.

\smallskip\noindent {\bf Case 2}: If the position "below" the start-chord is a non-chord, then we cannot produce the previous swap. Then,  however, the non-edge $u_1w_{i-2}$ is the current-chord. For sake of simplicity we assume that $i-2=2$ so we are in Figure \ref{f:sweep}. Consider now the alternating  $C_6$ cycle:  $u_1,w_2, u_3,w_3, u_4,w_4.$ It has altogether three vertex pairs which may be chords. We know already that $u_1w_3$ is a non-chord. If none of the three is chord, then this is an $\F$-compatible circular $C_6$-swap - and accordingly to the definitions we can swap it in one step. Again, we found the valid swap $w_2u_3, w_3u_4, w_4u_1 \Rightarrow u_1w_2, u_3w_3, u_4w_4.$ After that we again have 2 bad chords, namely $u_1w_2$ and $w_4 u_1,$ and together we have at most two bad positions in the new $\widehat M(X+Y-Z)$ with at most one $2$-value and at most one $-1$-value.

Finally if one position, say $w_2u_4,$  is a chord then we can process this $C_6$ with two swap operations. If this chord is, say, an actual edge, then we swap $w_2u_4, w_4u_1 \Rightarrow u_1w_2, u_4w_4.$ After this we can take care for the $w_2,u_3,w_3,u_4$ cycle. Along this sequence we never create more, than 3 bad chords: the first swap makes chords $w_2u_4, w_4u_1$ and $u_1w_2$  bad ones, and the second "cures" $w_2u_4$ but does not touch  $u_1w_2$ and $w_4u_1.$ So along this swap sequence we have 3 bad chords, at the end we have only 2. On the other hand, if the chord $w_2u_4$ is not an edge, then we can swap $w_2u_3, w_3u_4 \Rightarrow u_3w_3, u_4w_2$, creating one bad edge, then taking care the four cycle $u_1,w_2,u_4,w_4$ we "cure" $w_2u_4$ but we switch $u_1w_2$ and $w_4u_1$ into bad chords. We finished our {\bf double-step} along the cycle.

In a double-step we make at most  three bad chords. When the first swap uses three chords along the cycle then we may have at most one bad chord (with $\widehat M$-value $0$ or $-1$) and then the next swap switches back the chord into its original status, and makes two new bad chords (with at most one $2$-value and one $-1$-value). When the first swap uses only one chord from the cycle, then it makes three bad chords (changing two chords into non-edge and one into edge), therefore it may make at most two $2$-values and one $-1$-value. After the second swap there will be only two bad chords, with at most one $2$-value, and at most one $-1$-value.

When only the third position corresponds to a chord in our $C_6$ then after the first swap we may have two $-1$-values and one $2$-value. However, again after the next swap we will have at most one of both types.
\begin{remark}\label{r:distance}
When two realizations are one swap apart (so they are adjacent in $\G$) then we say that their auxiliary matrices are at swap-distance one. Since one swap changes four positions of the matrix, therefore the Hamming distance of these matrices is 4.
\end{remark}

\medskip\noindent
Finishing our single- or double-step the previous current-chord becomes the new start-chord and we look for the new current-chord. Then we repeat our procedure. There is only one important point to be mentioned: along the step, the start-chord switches back into its original status, so it will not be a bad chord anymore. So even if we face a double-step the number of bad chords never will be bigger than three (together with the chord $w_iu_1$ which is still in the wrong status, so it is bad), and we have always at most two $2$-values and at most one $-1$-value in $\widehat M(X+Y-Z).$

When our current-chord becomes to $w_1u_2$ then the last step will switch back the last start-chord into its correct status, and the last current-chord cannot be in bad status. So, when we finish our sweep from $u_1w_i$ to $w_1u_1$ at the end we will have only one bad chord (with a possible $2$-value in $\widehat M$). This concludes the first iteration of our algorithm.

\bigskip\noindent
For the next iteration we seeks a new start-chord between $w_iu_1$ and $w_\ell u_1$  and chord $w_iu_1$ becomes the new end-chord. We will repeat our sweeping process for this setup, and we will repeat it as long as all chords will be processed, so we fond the entire realization sequence from $G$ to $G'.$ If in the first sweep we had a double-step, then it will never occur later, so altogether with the bad (new) end-chord we never have more than three bad chords, with at most two $2$-values and at most one $-1$-value.

However, if the double-step occurs sometimes later, for example in the second sweep, then we face to the following situation: if we perform a circular $C_6$-swap, then there cannot be any problem. So we may assume that there is a chord in our $C_6$, suitable for a swap. If this chord is a non-edge, then the swap around it produces one bad chord, and at most one bad position in $\widehat M.$ The only remaining case when that chord is an edge. After the first swap there will be four bad chords, and there may be at most three $2$-values and at most one $-1$ value. However after the next swap (finishing the double step) we annihilate one of the $2$-values, and after that swap there are at most two $2$-values and at most $-1$-value along the entire swap sequence. When we finish our second sweep, then chord $w_iu_1$ will be switched back into its original status, it will not be bad anymore.

We apply iteratively the same algorithm, and after at most $\ell$ sweep sequence, we will process the entire cycle $\C.$ This finishes the construction of the required swap sequence (and the required realization sequence). \qed

\medskip\noindent Meanwhile we also proved the following important observation:
\begin{lemma}\label{th:bad}
Along our procedure each occurring auxiliary matrix $\widehat M(X+Y-Z)$ is at most swap-distance one from a matrix with at most three bad positions: with at most two $2$-values and with at most one $-1$-value in the same column, which does not coincide with the center of the forbidden star.
\end{lemma}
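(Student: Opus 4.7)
The plan is to verify the lemma by a bookkeeping argument that traces through the case analysis already carried out in Section~\ref{sec:miles}, checking that after each elementary swap the auxiliary matrix $\widehat M(X+Y-Z)$ either has the claimed structure or becomes such a matrix after one further swap. No new combinatorial content is needed; the lemma compresses the invariants maintained by the sweep algorithm.

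First I would confirm the invariant at ``clean'' moments, i.e., at the boundaries between consecutive steps of the algorithm. At such a boundary the bad chords are exactly those chords on $\C$ whose current status in $Z$ differs from their status in $G$, and by the sweep discipline these are either (a) the unprocessed end-chord $w_iu_1$ inherited from the previous sweep, contributing at most one $2$-value in column $u_1$, or (b) the freshly formed end-chord $u_1w_j$ produced by the step just completed, contributing at most one $2$-value and at most one $-1$-value, again in column $u_1$. Combining (a) and (b) never yields more than three bad positions, all in column $u_1$. Because the algorithm picked $u_1 \neq s$ at the outset of processing $\C$, column $u_1$ is not the center of the forbidden star.

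Next I would handle the transient states \emph{inside} a double-step. The only situation where the invariant is temporarily violated is the sub-case of Case~2 in which the internal chord of the auxiliary $C_6$ is an actual edge: after the first of the two elementary swaps, a bad chord appears in a column other than $u_1$ (for instance $w_2u_4$ in the worked example), spoiling the ``same column'' part of the invariant. However, the second elementary swap of the double-step immediately annihilates that extra bad chord and returns the matrix to a state satisfying the good invariant described above. By Remark~\ref{r:distance} the two consecutive auxiliary matrices differ in exactly four entries, so the transient matrix is at swap-distance one from a good matrix, which is exactly what the lemma permits.

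The main obstacle, in my view, is the asymmetric accounting of $+2$ and $-1$ entries, since a bad position can be bad in two qualitatively different ways. I would therefore close by enumerating the three configurations of the $C_6$ in Case~2 (no internal chord, the internal chord a non-edge, the internal chord an edge) and, for each, listing the $\widehat M$-values of all bad positions after each elementary swap, verifying in each sub-case that either the good invariant holds outright or it holds after one more swap. The counts produced by the sweep description in Section~\ref{sec:miles} already supply all the data, so the lemma follows by direct inspection.
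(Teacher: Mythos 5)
Your proposal is correct and is essentially the paper's own argument: the lemma is proved there precisely by the bookkeeping of Section~\ref{sec:miles} (the paper introduces it with ``Meanwhile we also proved\dots''), namely that at each clean moment the bad positions are the old end-chord plus the current sweep's start- and current-chords, giving at most two $2$-values and one $-1$-value in column $u_1\neq s$, with transient violations occurring only mid-double-step and cured by the second swap of that step. One small imprecision: the non-edge sub-case of the double-step also transiently violates the invariant (the first swap can create a second $-1$-value, in a column other than $u_1$), not only the edge sub-case, but your planned enumeration of all three $C_6$ configurations handles this anyway.
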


\section{The analysis of the swap sequences between "milestones"}\label{sec:anal}
\noindent
What remains is to show that the defined swap sequences between $H_i$ and $H_{i+1}$ satisfy the properties $(\Theta)$ and $(\Omega)$ of the simplified Sinclair's method. The first one is easier to see, since we can process a cycle of length $2\ell$ in $\ell-1$ swaps. Therefore the derived constant $c$ in $(\Theta)$ is actually 1.

We introduce the {\bf switch} operation on $0/1$ matrices with forbidden positions: we fix the four corners of a submatrix (none of them is forbidden), and we add $1$ to two corners in a diagonal, and add $-1$ to the corners on the other diagonal. This operation clearly does not change the column and row sums of the matrix. For example if we consider the matrix $M_G$ of a realization of our $\bd ^\F$ and make a valid swap operation, than it looks like  as a switch in this matrix. The next statement is trivial but very useful:
\begin{lemma}\label{th:Hamming}
If two  matrices have {\bf switch-distance} 1, then their Hamming distance is $4$. Consequently if the switch-distance is $c$ then the Hamming distance is bounded by $4c.$
\end{lemma}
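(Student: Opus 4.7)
The plan is to unpack the definition of a single switch, count the entries that actually change, and then chain the result through the triangle inequality for Hamming distance to obtain the general bound.

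For the first statement I will use that both matrices under consideration are valid $0/1$ matrices with forbidden positions, so their entries lie in $\{0,1\}$. A switch adds $+1$ to two diagonal corners of a $2\times 2$ submatrix and $-1$ to the other two. In order for the resulting matrix to remain a $0/1$ matrix, the two corners that receive $+1$ must have been $0$ and become $1$, and the two that receive $-1$ must have been $1$ and become $0$. Hence all four selected corners flip their value, while all other positions are unchanged, giving a Hamming distance of exactly $4$.

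For the second statement I will invoke the definition of switch-distance: if the switch-distance equals $c$, there is a sequence $A = M_0, M_1, \ldots, M_c = B$ with each consecutive pair at switch-distance $1$. Applying the first part of the lemma to each consecutive pair and the standard triangle inequality for Hamming distance,
\[
\mathfrak{d}(A,B) \le \sum_{j=0}^{c-1} \mathfrak{d}(M_j, M_{j+1}) = \sum_{j=0}^{c-1} 4 = 4c,
\]
which is the claimed bound.

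There is no genuine obstacle: the only point worth flagging is the subtle observation that forces the first part of the argument, namely that the $\pm 1$ perturbation rule combined with the constraint that both matrices are $0/1$ matrices forces a genuine flip at each of the four corners rather than merely a change in value or an escape out of $\{0,1\}$. Once that is noted, both parts fall out directly from counting and the triangle inequality, with no case analysis needed.
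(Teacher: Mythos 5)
Your proof is correct; the paper treats this lemma as immediate and supplies no proof at all, so you have simply filled in the obvious argument (four corners each change, triangle inequality for the general bound). One small remark: in the paper the switches of Lemma~\ref{th:switch} are actually applied to the auxiliary matrices $\widehat M(X+Y-Z)$, whose entries may be $-1$ or $2$ rather than only $0/1$; your conclusion still holds there because adding $\pm 1$ to an integer entry always changes it, so exactly the four chosen corners differ regardless of the $0/1$ assumption you invoke.
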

\noindent We will prove now that property $(\Omega)$ holds for our auxiliary matrices:
\begin{theorem}\label{th:omega}
For any realizations $X$ and $Y$ furthermore for any realization $Z$ on a swap sequence from $X$ to $Y$ there exists a realization $K$ such that
$$
\mathfrak{d}\left (\widehat M(X+Y-Z), M_K \right ) \le 16.
$$
\end{theorem}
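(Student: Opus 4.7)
The plan is to combine Lemma~\ref{th:bad} with Lemma~\ref{th:Hamming} and then construct $K$ by at most three local $2\times 2$ switches on a matrix very close to $\widehat M(X+Y-Z)$. Lemma~\ref{th:bad} supplies a matrix $\widehat M'$ such that $\widehat M(X+Y-Z)$ is at swap-distance one from $\widehat M'$ and the only non-$\{0,1\}$ entries of $\widehat M'$ are at most two $2$'s and at most one $-1$, all lying in a common column $c$ different from the star-centre $s$. By Lemma~\ref{th:Hamming} we obtain $\mathfrak{d}(\widehat M(X+Y-Z),\widehat M')\le 4$, so by the triangle inequality for the Hamming distance it will be enough to exhibit a realization $K$ with $\mathfrak{d}(\widehat M', M_K)\le 12$.

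To build $K$, first recall from the observation preceding this section that the row and column sums of $\widehat M'$ coincide with those of any realization of $\bd^\F$; consequently the only obstacle to $\widehat M'$ already being a realization matrix lies in its at most three defective entries. I would remove these defects one by one, each time by a $2\times 2$ switch that changes exactly four matrix entries and so contributes at most $4$ to the Hamming distance. For a defective $2$-entry at $(r_i,c)$, a switch at positions $(r_i,c),(r_i,c_k),(r_l,c),(r_l,c_k)$ with sign pattern $(-1,+1,+1,-1)$ turns the entry into $1$, provided $\widehat M'(r_i,c_k)=\widehat M'(r_l,c)=0$ and $\widehat M'(r_l,c_k)=1$; symmetrically, the $-1$-entry at $(r_3,c)$ is eliminated by a switch with sign pattern $(+1,-1,-1,+1)$ at $(r_3,c),(r_3,c'),(r',c),(r',c')$ provided $\widehat M'(r_3,c')=\widehat M'(r',c)=1$ and $\widehat M'(r',c')=0$. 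In each case the new values of the four entries lie in $\{0,1\}$, so the switch removes one defect without creating a new one, and after at most three such switches we arrive at a realization $K$ with $\mathfrak{d}(\widehat M',M_K)\le 12$.

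The main difficulty is to show that the three switches always exist in the presence of the forbidden-edge set $\F$, i.e.\ that the rows $r_l,r'$ and columns $c_k,c'$ can be chosen so that every position that becomes a $1$ is actually a chord and every $\{0,1\}$-value used by the switch really is present in $\widehat M'$. Here the hypothesis $c\neq s$ is essential: column $c$ meets $\F$ in at most one position (coming from the $1$-factor), which gives a large supply of rows $r_l,r'$ meeting the required entry conditions in column $c$. Similarly each of the defective rows $r_1,r_2,r_3$ is blocked by $\F$ in at most two columns (the $1$-factor partner and column $s$); by half-regularity all columns outside $s$ share the same column sum, so a straightforward double-counting in the submatrix on rows $r_1,r_2,r_3$ forces the required co-occurrence of $0$'s and $1$'s in some admissible chord column. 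A short case distinction (depending on whether the defective rows share forbidden columns with one another and with $c$) then establishes the existence of the three switches and closes the argument.
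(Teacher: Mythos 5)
Your reduction is the same as the paper's: Lemma~\ref{th:bad} plus Lemma~\ref{th:Hamming} give a nearby matrix $\widehat M'$ at Hamming distance at most $4$ whose only defects are at most two $2$'s and one $-1$ in a single column $c\ne s$, and it then suffices to repair $\widehat M'$ by at most three switches, yielding $4+3\cdot 4=16$. This is exactly the content of the paper's Lemma~\ref{th:switch}, and your arithmetic is correct. The gap is in the existence of the three switches, which is the entire combinatorial content of the proof and which your proposal only gestures at. In particular, your stated discipline --- remove the defects \emph{one by one, each switch creating no new defect} --- is not justified and is not obviously achievable. The obstruction to finding the partner row $r'$ and column $c'$ is not only the forbidden set $\F$, as your sketch suggests, but the other defects themselves. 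Concretely, to kill the $-1$ at $(r_3,c)$ you pick $c'\ne s$ with $\widehat M'[r_3,c']=1$ and then need a row with a $1$ in column $c$ facing a $0$ in column $c'$; the equal-column-sum count shows that $c'$ carries a surplus of three $1$'s against non-$1$ entries of $c$, but in the three-defect configuration this surplus can be absorbed exactly by the two $2$-rows and the single forbidden position of column $c$, leaving no admissible row for the switch. So the ``straightforward double-counting'' does not close Case 3 as stated, and when $d(r_3)=1$ there are only two candidate columns $c'$ to try (one of which may even be $s$).

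The paper's proof resolves precisely this difficulty by abandoning the one-defect-at-a-time discipline: in its Case 2a it uses a switch on the $2\times 2$ submatrix spanned by both bad rows, which may temporarily turn a clean $1$ into a new $2$ that is then removed by a further switch; and in its Case 3 it first looks for a switch pairing a $0$ in a $2$-row against a $1$ in the $-1$-row (killing two defects at once), and only when every such pairing fails does a refined count --- one that explicitly budgets the contribution of the remaining $2$-value to the column-sum difference and the at most one cross in column $c$ --- produce a row in which a single $2$ can be switched off, reducing to Case 2. To complete your argument you would either have to import this case analysis or prove that defects can always be removed independently with no new defect created; the latter claim is exactly where your proposal would fail under scrutiny. (A small additional point: when choosing the auxiliary column you must also rule out the star-centre column $s$, since its column sum need not equal the common degree; the paper does this explicitly and your sketch omits it.)
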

\noindent Due to Lemmas \ref{th:bad} and \ref{th:Hamming} it is enough to show that:
\begin{lemma}\label{th:switch}
Any matrix $\widehat M(X+Y-Z)$ with constant column sums (this not necessarily holds for the center of the forbidden star)  and at most three bad positions  (where there are at most two $2$-values and at most one $-1$-value) can be transformed into a valid $M_K$ adjacency matrix with at most three switch operations.
\end{lemma}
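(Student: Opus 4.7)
The plan is to proceed by case analysis on the configuration of the (at most three) bad positions in $\widehat M(X+Y-Z)$, which by Lemma~\ref{th:bad} all lie in a single column $c_0\ne s$ and consist of at most two $2$-entries (located at rows $r_1,r_2$) and at most one $(-1)$-entry (located at row $r_3$). Outside column $c_0$ every entry of $\widehat M$ lies in $\{0,1\}$; the row sums and all column sums except possibly the $s$-column agree with those of any valid realization of $\bd^\F$; at most one position of column $c_0$ is forbidden (from the $1$-factor, since $c_0\ne s$); and at most two positions of any single row are forbidden (the $1$-factor partner of that row and, when the row is a star-leaf, the column $s$).

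For each bad entry I will exhibit a single switch that replaces it by a value in $\{0,1\}$ while keeping the remaining three corners in $\{0,1\}$ and off $\F$. To repair a $2$ at $(r_i, c_0)$ I would choose a column $c^*\ne c_0$ and a row $r^*\ne r_i$ with $(r_i,c^*)=0$, $(r^*,c_0)\in\{0,-1\}$, and $(r^*,c^*)=1$, and with all four corners non-forbidden; the switch at $\{(r_i,c_0),(r_i,c^*),(r^*,c_0),(r^*,c^*)\}$ which puts $-1$ on the first and last corners and $+1$ on the middle two then sends the bad entry from $2$ to $1$ and places all four corner values into $\{0,1\}$. A $-1$ at $(r_3,c_0)$ is handled symmetrically. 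When both a $2$ at $(r_1,c_0)$ and a $-1$ at $(r_3,c_0)$ are present, I would first try the choice $r^*=r_3$ in the repair of the $2$: that single switch simultaneously sends the $2$ to $1$ and the $-1$ to $0$, fixing two bad entries at once. Consequently the worst case of two $2$s plus one $-1$ needs only one combined switch together with one individual switch, and every easier case needs at most two; the statement's bound ``at most three'' therefore holds with slack.

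The remaining content is the existence of the partner pair $(r^*,c^*)$ for each needed switch. Because $c_0\ne s$, column $c_0$ contains at least $|W|-a$ good zero-entries and at least $a-3$ good one-entries, at most one of each being forbidden; row $r_i$ contains at least $|U|-d_{r_i}$ zero-entries outside $c_0$ (with at most two forbidden); and row $r_3$ contains at least $d_{r_3}+1$ one-entries outside $c_0$. A double inclusion-exclusion then yields at least $d_{r_3}-d_{r_i}+O(1)$ columns $c^*$ witnessing the combined switch, with analogous estimates for the individual switches. The main obstacle I anticipate is the degree-imbalanced regime, for instance $d_{r_3}\ll d_{r_i}$, where the combined switch need not exist and the counts for the individual switches become tight. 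In that regime I would argue using the rigidity of the column sum $a$ together with the sparseness of $\F$: if no non-forbidden partner $(r^*,c^*)$ existed for some bad entry, then the roughly $|W|-a-O(1)$ candidate rows would have to concentrate all of their one-entries into fewer than $d_{r_i}+O(1)$ columns, contradicting the constant column-sum $a$ holding simultaneously across all remaining columns. This produces a valid adjacency matrix $M_K$ reachable from $\widehat M$ by at most three switches, establishing the lemma.
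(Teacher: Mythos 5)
Your high-level plan is the same as the paper's (repair each bad entry by a switch found via row/column-sum comparisons, and merge the repair of a $2$ with that of a $-1$ into one switch when possible), but the two places where the lemma actually has content are exactly the places you leave as sketches, and as written they do not go through. First, your count of usable $0$-entries in the row of a $2$-value is not justified: if the row $r_i$ has $\alpha$ forbidden entries, it contains exactly $|U|-\alpha+1-d_{r_i}$ zeros, which can be as small as $1$ (and that single zero could sit in column $s$, where the column-sum comparison is unavailable and the forbidden star can live). The paper needs, and proves, the separate observation that a row carrying a $2$ belongs to a vertex of the alternating cycle, hence $d(w)\le |U|-\alpha-1$, which is what guarantees \emph{two} zeros and hence a pivot column $c^*\ne s$; nothing in your degree bookkeeping supplies this.

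Second, and more seriously, in the three-bad-position case your own analysis concedes that the combined switch "need not exist," and the fallback concentration argument is not a proof: the counts there are exactly tight, not slack. Fixing a pivot column $c^*\ne s$ with $\widehat M[r_1,c^*]=0$, column $c^*$ has exactly $a$ ones among the rows other than $r_1$, while the rows where column $c_0$ is unavailable (its $a-3$ ones, the second $2$-row, the $-1$-row, and its one possible $\maltese$) can number exactly $a$; so "all one-entries concentrate on unavailable rows" is \emph{not} a contradiction with constant column sums, and your $O(1)$ error terms hide precisely this failure mode. The paper escapes it by a careful $\pm1$ ledger of the column-sum difference between $c_0$ and $c^*$ (the $-1$-row and each $2$-row each contribute a known amount), combined with "at most one $\maltese$ per column $\ne s$," to first extract a switch killing one $2$ (possibly creating no new bad entry) and only then reduce to the two-bad-position case; this is also why the true worst case needs three switches, not the two you claim. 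Your Case 2b analogue (both $1$-entries of the $-1$-row facing forbidden positions) is likewise never addressed. Until these existence arguments are carried out with exact counts rather than asymptotic slack, the proof is incomplete.
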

\begin{proof}
Consider now a certain $\widehat M$ which is not necessarily a valid adjacency matrix of a realization. We will show pictures about the submatrix in this matrix which describes the current alternating cycle $\C.$ We choose a submatrix, where the center $s$ of the forbidden star is in the first column. (We choose this submatrix as an illustration tool, but we still consider the entire matrix to work with.) We know that this matrix contains at most two $2$-values and at most one $1$-value. All these positions are adjacent to the center $u_1$  of our sweeping sequence (see Figure \ref{f:sweep}), so they are in the same column.

For  simplicity from now on we will denote the center of the sweep as well the column  $u.$ The forbidden positions are denoted with $\maltese$. Any column (except column 1) may contain at most one of them, and any row may contain at most two of them. Finally in our pictures the character $\diamond$ stands for a character which we are not interested in. That is, it can be $0$ or $1$ or $\maltese.$

We will distinguish cases, depending on the occurring of values $2$ and $-1.$

\medskip\noindent {\bf{Case 1.}} Column $u$ has one bad position, which can be $-1$ or $ 2$, or it has two 2-values. Consider at first the subcase when $\widehat M[uw] = -1$. By definition that means that  chord $uw$ is an edge in $Z$ but non-edge in both $X$ and $Y.$ So vertex $w \in W$ has at least one adjacent edge, therefore the row-sum in its row is at least $1.$ Therefore there are at least two positions in row $w$ with entries $1$. They are in column $u_1$ and $u_2.$  At least one of them, say $u_1$, differs from $s$. Since the column sums are constant, therefore there exists at least two rows $w_1$ such that $\widehat M[uw_1]=1$ while $\widehat M[u_1w_1]=0$ or $\maltese$. However, there can be at most one forbidden position in $u_1$, so in at least one of the rows, the entry is $0$. Using these positions for the corresponding switch it eliminates the bad position without creating a new one. (See Figure \ref{f:ketto}.)
\begin{figure}[h!]
$$
\begin{pmatrix}
 \maltese &\diamond   &\diamond&\diamond &\diamond&\diamond \\
& & \vdots &  & \vdots &    \\
\maltese &\diamond    &\diamond&   \maltese    &\diamond&\diamond  \\
 &  & \vdots &  & \vdots & \\
\maltese & \mathbf{1}    &\diamond&   \mathbf{0}    &\diamond&\diamond \\
& & \vdots &  & \vdots &    \\
\diamond & \mathbf{-1}      &\diamond&   \mathbf{1}    &\diamond & \mathbf{1}      \\
& & \vdots &  & \vdots &    \\
\end{pmatrix}
\qquad
\Rightarrow \qquad
\begin{pmatrix}
 \maltese &\diamond   &\diamond&\diamond &\diamond&\diamond \\
& & \vdots &  & \vdots &    \\
\maltese &\diamond     &\diamond&   \maltese    &\diamond&\diamond  \\
 &  & \vdots &  & \vdots & \\
\maltese & \mathbf{0}    &\diamond&   \mathbf{1}    &\diamond&\diamond \\
& & \vdots &  & \vdots &    \\
\diamond & \mathbf{0}      &\diamond&   \mathbf{0}    &\diamond & \mathbf{1}      \\
& & \vdots &  & \vdots &    \\
\end{pmatrix}
$$
\caption{Case 1\qquad $\maltese=$ forbidden \quad$\diamond=0/1/\maltese$ }\label{f:ketto}
\end{figure}

\noindent Before we continue, we prove an important observation:\\
{\bf Observation} If $w$ belongs to the alternating cycle $\C$ and $\widehat M[uw]=2$ then row $w$ contains at least two $0$-values.

Indeed, there are $\alpha$ forbidden chords in row $w.$  Since $w$ is in an alternating cycle, therefore $d(w)\le |U|-\alpha-1.$ Therefore the sum of row $w$ in $\widehat M(X+Y-Z) \le |U|-\alpha-1.$ But it contains a $2$ and it does not contain -1 therefore there are at least two $0$'s in it. \qed

When the single bad value in $\widehat M$ is $2$ then, due to our previous Observation, in its row there are two $0$'s. And with them one can repeat the reasoning which we used about the unique $-1$-value.

Finally, when there are two $2$-values which raises a very similar situation. Here we can do  the same procedure independently on both rows. In this case, however, we need two switch operations.

\noindent {\bf{Case 2.}} Here we assume that there is one $2$-value and one $-1$-value in column $u.$ For example $\widehat M[uw_1]=2$ and  $\widehat M[uw_2]=-1$. Again, in row $w_2$ there are at least two $1$-values. \\
{\it Case 2a} Assume at first that we have $u_1\in U$ s.t.  $\widehat M[u_1w_2]=1 $ and $\widehat M[uw_1]\ne \maltese.$ Then the corresponding switch will produce $\widehat M[u_1w_1]= 1/2$ while the three positions are $0$ or $1.$ (See Figure \ref{f:harom}.)   If now $\widehat M[u_1w_1]= 2$ then we are back to Case 1, and one more switch eliminates the last bad position as well. So we needed at most two switches.

\begin{figure}[h!]
$$
\begin{pmatrix}
 \maltese &\diamond   &\diamond&\diamond &\diamond&\diamond \\
& & \vdots &  & \vdots &    \\
\maltese & \diamond    &\diamond&   \maltese    &\diamond&\diamond  \\
 &  & \vdots &  & \vdots & \\
\maltese & \mathbf{0/1}    &\diamond&   \mathbf{2}    &\diamond&\diamond \\
& & \vdots &  & \vdots &    \\
\diamond & \mathbf{1}      &\diamond&   \mathbf{-1}    &\diamond & \diamond    \\
& & \vdots &  & \vdots &    \\
\end{pmatrix}
\qquad
\Rightarrow \qquad
\begin{pmatrix}
 \maltese &\diamond   &\diamond&\diamond &\diamond&\diamond\\
& & \vdots &  & \vdots &    \\
\maltese & \diamond    &\diamond&   \maltese    &\diamond&\diamond  \\
 &  & \vdots &  & \vdots & \\
\maltese & \mathbf{1/2}    &\diamond&   \mathbf{1}    &\diamond&\diamond\\
& & \vdots &  & \vdots &    \\
\diamond & \mathbf{0}      &\diamond&   \mathbf{0}    &\diamond & \diamond      \\
& & \vdots &  & \vdots &    \\
\end{pmatrix}
$$
\caption{Case 2a \qquad $\maltese=$ forbidden \quad$\diamond=0/1/\maltese$ }\label{f:harom}
\end{figure}

\begin{figure}[h!]
$$
\begin{pmatrix}
 \maltese &\diamond   &\diamond&\diamond &\diamond&\diamond\\
& & \vdots &  & \vdots &    \\
\maltese & \diamond      &\diamond&   \maltese    &\diamond&\diamond  \\
 &  & \vdots &  & \vdots & \\
\maltese & \mathbf{0}    & \diamond &   \mathbf{2}    &\diamond&\maltese\\
& & \vdots &  & \vdots &    \\
\mathbf{1} & \mathbf{0}      &\diamond &   \mathbf{-1}    &\diamond & \mathbf{1}  \\
& & \vdots &  & \vdots &    \\
\end{pmatrix}
\qquad
\Rightarrow \qquad
\begin{pmatrix}
 \maltese &\diamond   &\diamond&\diamond &\diamond&\diamond\\
& & \vdots &  & \vdots &    \\
\maltese &\diamond    &\diamond&   \maltese    &\diamond&\diamond  \\
 &  & \vdots &  & \vdots & \\
\maltese & \mathbf{1}    & \diamond &   \mathbf{1}    &\diamond&\maltese\\
& & \vdots &  & \vdots &    \\
\mathbf{1} & \mathbf{-1}      &\diamond &   \mathbf{0}    &\diamond & \mathbf{1}  \\
& & \vdots &  & \vdots &    \\
\end{pmatrix}
$$
\caption{Case 2b\qquad $\maltese=$ forbidden \quad$\diamond=0/1/\maltese$ }\label{f:negy}
\end{figure}
\noindent {\it Case 2b} It can happen, that there are only two $1$-values in row $w_2$ and both are facing with forbidden positions in row $w_1.$ Then at least one $0$ in row $w_2$ faces a chord in row $w_2.$ (See Figure \ref{f:negy}) The appropriate switch kills 2 bad chords and can make at most one $-1$-value. We are ready or we are back to Case 1.

\medskip
\noindent {\bf{Case 3.}} Finally suppose that there are three bad positions, two $2$-values at positions $uw_1$ and $uw_2$ and one $-1$-value at position $uw_3.$ Now both rows $w_1$ and $w_2$ contains at least two $0$'s. If any of them faces a $1$ in row $w_3$ then an appropriate switch annihilates one $2$ and one $-1$ and does not create new bad position. We are back to Case 1. Altogether we need two switches.

If this is not the case then we consider the following: assume that $\widehat M[u_1w_1]=0.$ Since the column sums are the same, and we assumed that  $\widehat M[u_1w_3]=0$ therefore there exists a row $w_4$ s.t. $\widehat M[u_1w_4]=1$ while $\widehat M[uw_4]=0.$ Then we can switch off this $2$-value without making a new bad position. After that we are back to Case 2. Altogether this requires at most three switches. We finished the proof of Lemma \ref{th:switch}.

If this is not the case then we consider the following: assume that $\widehat M[u_1w_1]=0.$
The column sums are the same, and we assumed that $\widehat M[u_1w_3]$ $=0$ or $\maltese$. Therefore the difference between column sums in $u$ and $u_1$ is $1$ due to rows $w_1$ and $w_3$, and the difference increase at least $1$ for row $w_2$, where against a $2$-value  in column $u$ there is either $1$ or $0$ in column $u_1$.  Therefore there exists at least two further rows, where there is a $1$ in column $u_1$ against a $0$ or $\maltese$ in column $u$. Since column $u$ can contain at most one $\maltese$, one of the rows must contain a $0$. Let it be denoted by $w_4$. Hence $\widehat M[u_1w_4]=1$ while $\widehat M[uw_4]=0.$ Then we can switch off this $2$-value without making a new bad position. After that we are back to Case 2. Altogether this requires at most three switches. We finished the proof of Lemma \ref{th:switch}.
\end{proof}
In turn this proves Theorem \ref{th:omega}, so our Markov chain is rapidly mixing as Theorem \ref{th:main} stated.

\section{Self-reduced counting problem}\label{sec:counting}
A decision problem is in NP if a non-deterministic Turing Machine can solve it in polynomial time. An equivalent definition is that there exists a witness proving the {\bf yes} answer to the question which witness can be verified in polynomial time. A counting problem is in \#P if it asks for the number of those witnesses of a problem from NP that can be verified in polynomial time (it might happen that not all witnesses are verifiable in polynomial time).

Two complexity classes, FPRAS and FPAUS, concern the approximability of counting problems. Here we give only narrative descriptions of these complexity classes, the detailed definitions can be found, for example, in \cite{JVV}.

A counting problem from \#P is in FPRAS (Fully Polynomial Randomized Approximation Scheme) if the number of solutions can be quickly estimated with a randomized algorithm such that the estimation has a small relative error with very high probability.

A counting problem from \#P is in FPAUS (Fully Polynomial Almost Uniform Sampler) if the solutions can be sampled quickly with a randomized algorithm that generates samples following a distribution being very close to the uniform one.

It is easy to see that a counting problem is in FPAUS if there is a rapidly mixing Markov chain for which
\begin{itemize}
\item a starting state can be generated in polynomial running time;
\item one step in the Markov chain can be conducted in polynomial running time; and
\item the relaxation time of the Markov chain grows only polynomially with the size of the
problem.
\end{itemize}
The Markov chain we gave the star+factor problem satisfies all these requirements.

Jerrum, Valiant and Vazirani proved that any \emph{self-reducible   counting problem} is in FPRAS iff it is in FPAUS \cite{JVV}.  A counting problem is self-reducible if the solutions for any problem instance can be generated recursively such that after each step in the recursion, the remaining task is another problem instance from the same problem, and the number of possible branches at each recursion step is polynomially bounded by the size of the problem instance.

Clearly, a graph with prescribed degree sequence can be built recursively by telling the neighbors of a node at each step, then removing the node in question and reducing the degrees of the selected neighbors. However, this type of recursion does not satisfy all the
requirement for being self-reducible since there might be exponentially many possibilities how to select the neighbors of a given vertex.

On the other hand, the degree sequence problem with a forbidden one factor and star tree is a self-reducible counting problem. Indeed, consider the center of the (possibly empty) star, $s \in U$, and the vertex $v \in V$ with the smallest index for which $(s,v)$ is a chord. Any  solution for the current problem instance belongs to one of the following two cases:
\begin{itemize}
\item The chord $(s,v)$ is not present in the solution. In that   case, extend the size of the star by adding chord $(s,v)$ to the   forbidden set, and do not change the degrees. This is another   problem instance from the star+factor problem, whose solutions are   the continuations of the original problem belonging to this case.
\item The chord $(s,v)$ is present in the solution. In that case,   extend the size of the star by adding chord $(s,v)$ to the forbidden   set, and decrease both $d_s$ and $d_v$ by one. The new degree sequence is still a half-regular, bipartite star+factor restricted degree sequence, and the solutions of this new problem extended with the previously decided step provide solutions of the original problem.
\end{itemize}
Since the star+factor counting problem is a self reducible counting problem, it is in FPRAS as it is in FPAUS.

\bigskip\noindent
We finish this paper with a short analysis of the connections between our approach and the paper \cite{JSV} of Jerrum, Sinclair and Vigoda. Their seminal result from 2004 solved the uniform sampling problem of perfect 1-factors of a given graph. As their Corollary 8.1 pointed out this method can be applied for uniform sampling of the set of all possible realizations of a given $f$-factor of a complete graph. It also proves that the problem is in FPRAS (and in FPAUS as well).

Since the restricted degree sequence problem in general is equivalent to the $f$-factor problem, therefore our star+factor RDS problem is only a special case of the $f$-factor problem, so the JSV result applies for it. This describes the similarity.

The important differences lay in the swap operations applied in the JSV method and in the Kannan-Tetali-Vempala's Markov chain. In the JSV method a special graph $\mathfrak{G}$ is introduced for the sampling via Tutte's gadgets. Then the swap operations are working on the graph $\mathfrak{G}$ with the unintended result that for a (sometimes long) sequence of swaps does not change at all the generated $f$-factor. Combining this issue with the known relative slow mixing time of the Jerrum-Sinclair-Vigoda's Markov chain, the resulted approach in not suitable for any practical application.

The KTV Markov chain operates in the original graph and each jump provides a new realization of the original degree sequence problem. The KTV Markov chain is  presumably much faster than the JSV chain, furthermore the JSV theorem does not proves the fast mixing nature of the KTV chain. Similarly it does not prove that the KTV chain provides a fast approximate counting algorithm.

\appendix
\section*{Appendix: the simplified Sinclair's method}
In this paper the fast mixing nature of our MCMC method was proved through the  application of the simplified Sinclair's  method, developed in \cite{MES}. To do so properly it requires a slight generalization of the original method.

The method takes two realizations $X$ and $Y$ of the same degree sequence. It  considers all possible ordered circuit decompositions of the symmetric difference of the edge sets, then it uniquely decomposes each such decomposition into  an ordered sequence $\C=C_1, \ldots, C_m$ of oriented cycles. Based on this latter decomposition the method determines a well defined unique path between $X$ and $Y$  in the Markov chain $\G.$

For that end the method defines first a sequence of "milestones". These are different realizations $X=H_0, H_1, \ldots, H_{m-1}, H_m=Y$  of the degree sequence where the edge set of any two consecutive realizations  $H_{i-1}, H_i$ differ exactly in the edges along the cycle  $C_i.$ (Until this point no swap operation happened.)

In the next phase for any particular $i=0,\ldots, m-1$ the method determines a sequence of valid swap operations transforming $H_{i-1}$ into $H_i$ - describing a unique path $Z_0,Z_1,\ldots Z_{\ell}$ between $H_{i-1}$ and $H_i$ in the Markov chain $\G.$ This sequence of course heavily depends on the available swap operations. In paper \cite{MES} these are the usual (bipartite) swap operations. In the current paper these are the restricted swap operations. These operations, while exchanging chords in the realizations along  the alternating cycle $C_i$, also use some further chords. Therefore the edge set of any $Z_i$ is not completely contained by $E(X)\cup E(Y),$  there exist a small number of edges in $Z_i$ which are non-edges in $X$ and in $Y$, or non-edges in $Z_i$ but edges in $X$ and $Y.$ If $Z_i$ is between the milestones $H_m$ and $H_{m+1}$, then $C_j$ for $j\ne m$ alternates in $Z_i$, and $C_i$ alternates with a "small error": there is a very small number of vertices where the alternation does not hold.

Along the process the simplified Sinclair's method requires (see the paper \cite{MES}, Section 5, (F)(c) ) that this number must be small. In the original application this number is actually one. Here, as we saw in Section \ref{sec:miles}, this number is three: that many bad chords may occur after any particular RSO. As we saw all these chords are adjacent to the same vertex $u_1.$

These numbers are used by the method to determine the size of a parameter set $\B$. This parameters set must have a polynomial size. When we have one bad chord, then  it is determined by its end points - there are at most $n^2$ possibilities for them. This provides an $n^2$ multiplicative factor to the size of $\B.$ When we have at most three bad chords, then they can be chosen at most $n^4$ ways: point $u_1$ is fixed ($n$ different choices), while the other three end points can be chosen at most $n^3$ independent ways. Altogether it provides an at most $n^4$ multiplicative factor to the size of $\B.$ This remark finishes the proof of the simplified Sinclair's method for the case of these restricted swap operations.
\end{document}